\newtheorem{theorem}{Theorem}[section]
\newtheorem{lemma}[theorem]{Lemma}
\title[local conformal rigidity in codimension  $\leq$ 5]
 {local conformal rigidity in codimension  $\leq$ 5}
\author{Sergio L. Silva}
\dedicatory{Universidade do Estado do Rio de Janeiro,
Instituto de Matem\'atica e Estat\'\i stica, Departamento de Estrutura Matemática, 
Rio de Janeiro, 20550-013,
Brazil, E-mail: sergiol@ime.uerj.br}
\keywords{local, conformal, rigidity}
\subjclass[2010]{Primary 53A30, 53C40; Secondary }
\begin{document}
\maketitle

\begin{abstract}
In this paper, for an immersion $f$ of an $n$-dimensional Riemannian manifold $M$ into $(n+d)$-Euclidean space we give a sufficient condition on $f$ so that, in case $d\leq 5$, any immersion $g$ of $M$ into $(n+d+1)$-Euclidean space that induces on $M$ a  metric that is conformal to the metric induced by $f$ is locally obtained, in a dense subset of $M$, by a composition of $f$ and a conformal immersion from an open subset of $(n+d)$-Euclidean space into an open subset of $(n+d+1)$-Euclidean space.  Our result extends a theorem for hypersurfaces due to M. Dajczer and E. Vergasta. The restriction on the codimension is related to a basic lemma in the theory of rigidity obtained by M. do Carmo and M. Dajczer.
\end{abstract}

\section{Introduction}

Let $M^n$ be an $n$-dimensional differentiable manifold and let $f\colon\;M^n\to \Bbb R^m$ and $g\colon\;M^n\to\Bbb R^\ell$ be two immersions into Euclidean spaces. We say that $\,g\,$ is conformal to $\,f\,$ when the metric induced on $\,M^{n}\,$ by $\,f\,$ and $\,g\,$ are conformal. That $f$ is {\it conformally rigid} means that for any other conformal immersion $h\colon\;M^n\to\Bbb R^m$, conformal to $f$, there exists a conformal diffeomorphism $\,\Upsilon\,$ from an open subset of $\Bbb R^m$ to an open subset of $\Bbb R^m$ such that $\,h=\Upsilon \circ f$.

In \cite{C-D}, do Carmo and Dajczer introduced a conformal invariant for an immersion $f\colon\;M^n\to\tilde{M}^{n+d}$ into a Riemannian manifold $\tilde{M}^{n+d}$, namely, {\it the conformal $s$-nullity} $\nu_{s}^{c}(p)$, $p\in M^n$, $1\leq s\leq d$(see Section 2 for definitions), and proved that is conformally rigid an immersion $f\colon\;M^n\to \Bbb R^{n+d}$ that satisfies $\,d\leq 4\,$, $\,n\geq 2d+3$ and $\,\nu_{s}^{c}\leq n-2s-1\,$for $\,1\leq s \leq d$. Their result generalizes a result for hypersurfaces due to E. Cartan (\cite{Ca}). It was observed in Corollary 1.1 of \cite{Si} that the do Carmo-Dajczer's conformal rigidity result also holds for $d=5$. The restriction on the codimension is due to the following basic result in \cite{C-D}

\begin{theorem} Let $\sigma\colon\; V_{1}\times V_{1}\to W^{(r,r)}$ be a nonzero flat symmetric bilinear form. Assume $\,r\leq 5\,$ and $\mbox{dim}\,N(\sigma)< \mbox{dim}\,V_{1}-2r$. Then
$\,S(\sigma)\,$ is degenerate.
\end{theorem}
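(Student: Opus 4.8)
The plan is to reduce the statement to a normalized form, prove it by contradiction, and run an induction on the index $r$. First, strip off the nullity: $\sigma$ descends to a flat symmetric bilinear form $\bar\sigma$ on $V_1/N(\sigma)$ with $N(\bar\sigma)=\{0\}$, $\bar\sigma\neq 0$ and $S(\bar\sigma)=S(\sigma)$, and the hypothesis $\dim N(\sigma)<\dim V_1-2r$ becomes $\dim(V_1/N(\sigma))\geq 2r+1$. Hence it suffices to prove: \emph{if $\sigma\colon V_1\times V_1\to W^{(r,r)}$ is flat and nonzero, $N(\sigma)=\{0\}$, $r\leq 5$ and $\dim V_1\geq 2r+1$, then $S(\sigma)$ is degenerate.} Assume, for a contradiction, that $S(\sigma)$ is nondegenerate; then $S(\sigma)$ carries an inner product of signature $(a,b)$ with $a,b\leq r$, so $\dim S(\sigma)\leq 2r$. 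It is convenient to rewrite flatness as the statement that $(x,y,z,w)\mapsto\langle\sigma(x,y),\sigma(z,w)\rangle$ is symmetric in all four arguments.

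Next I would bring in regular elements. Write $\sigma_x:=\sigma(x,\cdot)$ and let $RE(\sigma)$ be the open, dense set of $x$ for which $\dim\sigma_x(V_1)$ is maximal. For $x\in RE(\sigma)$ one has the two standard facts: (i) a limiting argument along the line $t\mapsto x+ty$ as $t\to 0$ yields $\sigma(y,\ker\sigma_x)\subseteq U_x:=\sigma_x(V_1)$ for every $y\in V_1$; and (ii) evaluating flatness with one entry in $\ker\sigma_x$ gives $\sigma(V_1,\ker\sigma_x)\perp U_x$. Combining (i) and (ii), $\sigma(\ker\sigma_x,\ker\sigma_x)$ lands in the radical of $U_x$, so $\sigma$ restricts on $K:=\ker\sigma_x$ to a flat form valued in the totally isotropic subspace $\mathrm{rad}(U_x)$ of $W$. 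Also $\dim K=\dim V_1-\dim U_x\geq \dim V_1-2r\geq 1$, so $K\neq\{0\}$.

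The induction then splits according to whether some regular $x$ has $U_x$ nondegenerate or every $U_x$ is degenerate. In the first case $\sigma|_{K\times K}=0$ while $\sigma(K,V_1)\subseteq U_x^{\perp}$; choosing $0\neq z\in K$ and comparing $\dim\sigma_z(V_1)$ against $\dim U_x^{\perp}$, together with $N(\sigma)=\{0\}$, should force a contradiction (immediately so when $U_x=W$, since then $\sigma_z\equiv 0$). In the second case the nonzero isotropic vectors in $\mathrm{rad}(U_x)\subseteq S(\sigma)$ are used, after restricting $\sigma$ to a suitable subspace, to produce a flat form into a nondegenerate space of index $\leq r-1$ with controlled nullity, so the inductive hypothesis applies --- with the base cases $r=1,2$ treated directly via the explicit form of flat bilinear forms into $\Bbb R^{1,1}$ and $\Bbb R^{2,2}$ --- and rank-nullity $\dim V_1\leq \dim K+\dim U_x\leq \dim K+2r$ then contradicts $\dim V_1\geq 2r+1$. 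I expect the genuine obstacle to be exactly this reduction step: verifying that, when $S(\sigma)$ is nondegenerate, the residual flat form is of strictly smaller index and retains a usable nullity bound. This is where the hypothesis $r\leq 5$ is indispensable, since for larger index there are exceptional flat bilinear forms (related to the exceptional examples in the theory of flat submanifolds) for which no such reduction is available and the estimate $\dim V_1-2r$ breaks down; the proof must therefore be organized so that only the finitely many configurations that persist for $r\leq 5$ remain to be excluded by direct inspection.
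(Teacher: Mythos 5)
First, a point of reference: the paper does not prove this statement at all --- Theorem 1.1 is quoted verbatim from do Carmo--Dajczer \cite{C-D}, where it is the Main Lemma and its proof occupies pp.~968--974. So your proposal can only be measured against that cited argument. Your preliminary reductions are correct and standard: quotienting by $N(\sigma)$, the bound $\dim S(\sigma)\leq 2r$ under the nondegeneracy assumption, the density of regular elements, the inclusion $\sigma(V_{1},\ker\sigma_{x})\subseteq U_{x}\cap U_{x}^{\perp}$ (which, note, holds for all of $V_{1}$ in the first slot, not just for $\ker\sigma_{x}\times\ker\sigma_{x}$; this is the paper's Lemma 2.2), and the observation that $\ker\sigma_{x}\neq\{0\}$ forces $U_{x}$ to be degenerate, since otherwise $\ker\sigma_{x}\subseteq N(\sigma)=\{0\}$. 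Up to this point you have reproduced the easy half of the argument.

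The genuine gap is the step you yourself flag as ``the genuine obstacle'': passing from ``$U_{x}\cap U_{x}^{\perp}\neq\{0\}$ for every regular $x$'' to ``$S(\sigma)\cap S(\sigma)^{\perp}\neq\{0\}$''. A nonzero isotropic vector in $U_{x}\cap U_{x}^{\perp}$ is orthogonal to $\sigma(x,V_{1})$ but not, a priori, to all of $S(\sigma)$, and bridging that distance is essentially the entire content of the theorem; it is exactly where $r\leq 5$ enters and where the Dajczer--Florit counterexample \cite{D-F} shows the statement fails for $r\geq 6$. Your sketch of this step (``restricting $\sigma$ to a suitable subspace, to produce a flat form into a nondegenerate space of index $\leq r-1$ with controlled nullity'') names the right strategy --- do Carmo and Dajczer do split off the isotropic piece $U(X)$ and a dual isotropic complement and induct on the index --- but the whole difficulty lies in establishing the ``controlled nullity'' of the residual form, and no argument is offered. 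Moreover, your closing contradiction is vacuous as written: rank--nullity gives $\dim V_{1}=\dim K+\dim U_{x}\leq\dim K+2r$, which is perfectly consistent with $\dim V_{1}\geq 2r+1$ whenever $\dim K\geq 1$, so no contradiction follows from that inequality alone. As it stands the proposal is a correct setup plus an accurate description of where the proof ought to go, but the theorem itself is not proved.
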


In the
 paper(see \cite{D-F}), M. Dajczer and L. Florit proved that the above theorem can not be improved.

Denoting by $N_{cf}^{n+d}$ a conformally flat Riemannian manifold of dimension $n+d$, namely an $(n+d)$-dimensional Riemannian manifold $N^{n+d}$ which is locally conformally diffeomorphic to an open subset of Euclidean space $\Bbb R^{n+d}$ with the canonical metric, Dajczer and Vergasta(\cite{D-V}) proved that if $f\colon\;M^n\to N_{cf}^{n+1}$ satisfies $n\geq 6$ and $\nu_{1}^{c}\leq n-4$ along $M^n$,  then any immersion $g\colon\;M^n\to N_{cf}^{n+2}$ conformal to $f$ is locally a composition in an open dense subset $\mathcal{U}$ of $M$. This result, still in the context of hypersurfaces, was extended by Dajczer-Tojeiro in \cite{D-T} for $g\colon\;M^n\to N_{cf}^{n+p}$, $2\leq p\leq n-4$,  assuming that $\nu_{1}^{c}\leq n-p-2$ and, if $p\geq 6$, further that $M^n$ does not contain an open $n-p+2$-conformally ruled subset for both $f$ and $g$. In this paper, we extend for codimension $\leq 5$ the result of Dajczer-Vergasta mentioned above in the following theorems.

\begin{theorem}Let $f\;\colon M^n\to \Bbb R^{n+d}$ be an immersion with $d\leq 5$ and $\,n > 2d+3$.  Assume that everywhere $\,\nu_s^c\leq n-2s-2,$ $1\leq s \leq d.$ If $g\colon\;M^n\to\Bbb R^{n+d+1}$ is an immersion conformal to $f$, then there exists an open dense subset $\mathcal{U}$ of $M^n$ such that $g$ restricted to $\mathcal{U}$ is locally a composition $g=\Gamma\circ f$  for some local conformal immersion $\Gamma$ from an open subset of $\,\Bbb R^{n+d}$ into an open subset of $\,\Bbb R^{n+d+1}$.\end{theorem}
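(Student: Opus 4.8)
We outline a proof, following the light-cone method used by Dajczer--Vergasta and its refinements.

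\emph{Step 1: reduction to the light-cone model.} Fix a metric $\langle\,,\,\rangle$ in the conformal class on $M^{n}$, say the one induced by $f$. Since $g$ is conformal to $f$, rescaling by the conformal factor relating the two induced metrics lifts $f$ and $g$ to isometric immersions $\hat f\colon M^{n}\to\Bbb V^{n+d+1}\subset\Bbb L^{n+d+2}$ and $\hat g\colon M^{n}\to\Bbb V^{n+d+2}\subset\Bbb L^{n+d+3}$ of $(M^{n},\langle\,,\,\rangle)$ into the light cones of $\Bbb R^{n+d}$ and $\Bbb R^{n+d+1}$ inside the corresponding Lorentz spaces. By the standard dictionary between conformal immersions into $\Bbb R^{m}$ and isometric immersions into $\Bbb V^{m+1}$, the statement to be proved is equivalent to the following: on an open dense subset $\mathcal U$, $\hat g$ is locally an isometric composition $\hat g=J\circ\hat f$ with $J$ an isometric immersion of an open subset of $\Bbb V^{n+d+1}$ into $\Bbb V^{n+d+2}$. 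So it suffices to prove this.

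\emph{Step 2: the comparison tensor and its reduction.} For $p\in M^{n}$ set $\beta_{p}(X,Y)=(\alpha^{\hat g}(X,Y),\alpha^{\hat f}(X,Y))$, a symmetric bilinear form on $T_{p}M$ with values in $N_{\hat g}M(p)\oplus N_{\hat f}M(p)$, the latter equipped with $\langle\,,\,\rangle_{\hat g}\ominus\langle\,,\,\rangle_{\hat f}$. Since $\Bbb L^{n+d+2}$ and $\Bbb L^{n+d+3}$ are flat and $\hat f,\hat g$ are isometric, the Gauss equations give that $\beta_{p}$ is flat. The target must now be cut down: the position vector fields $\hat f$ and $\hat g$ are null normal sections whose shape operators equal $-\mathrm{Id}$, so the umbilic components of $\alpha^{\hat f}$ and of $\alpha^{\hat g}$ along them coincide; moreover the components of $\alpha^{\hat f}$ and $\alpha^{\hat g}$ along the conjugate null sections are determined by the common intrinsic metric $\langle\,,\,\rangle$, hence also coincide. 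By these coincidences $\beta_{p}$ is orthogonal to an isotropic subspace spanned by those ``scaffolding'' directions; passing to the quotient, $\beta_{p}$ induces a flat symmetric bilinear form $\bar\beta_{p}$ with values in a space of the form $W^{(r,r)}$. Tracking the reduction — using both the light-cone structure and a further reduction afforded by the conformal relation between $f$ and $g$ — gives $r\le d$, so $r\le5$, and this is precisely the point at which the hypothesis $d\le5$ enters.

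\emph{Step 3: applying Theorem 1.1 and concluding.} Restrict to the open dense subset $\mathcal U$ where the relative nullities of $f$ and $g$, the ranks of their second fundamental forms, and $\dim N(\bar\beta)$ are locally constant. By the definition of the conformal nullities one has a bound of the form $\dim N(\bar\beta_{p})\le\nu_{d}^{c}(p)$, so the hypothesis $\nu_{s}^{c}\le n-2s-2$ (with $s=d$) and $n>2d+3$ yield $\dim N(\bar\beta_{p})\le n-2d-2<n-2r=\dim T_{p}M-2r$. Hence Theorem 1.1 applies (if $\bar\beta\equiv0$ the conclusion is immediate), and $S(\bar\beta_{p})$ is degenerate. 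A null direction in the radical of $S(\bar\beta_{p})$, together with the Codazzi equation for $\hat g$, produces a parallel null line subbundle of $N_{\hat g}M$ along which $\alpha^{\hat g}$ reproduces data of $\hat f$; using it one lowers by one the codimension entering the comparison tensor and iterates the argument, now invoking $\nu_{s}^{c}\le n-2s-2$ for $s=d-1,\dots,1$ (each stage again covered by Theorem 1.1, the reduced index being at most $s\le5$, the dimension inequalities kept valid by $n>2d+3$). This exhausts the ``extra'' part of $N_{\hat g}M$ except for a rank-one subbundle, along which $\alpha^{\hat g}$ is forced into the umbilic form characteristic of the second fundamental form of a conformal map. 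Thus on $\mathcal U$ the normal bundle of $\hat g$ splits parallelly and isometrically as a copy of $N_{\hat f}M$ plus a rank-one bundle, with $\alpha^{\hat g}$ reproducing $\alpha^{\hat f}$ on the first factor; the fundamental theorem of submanifolds then furnishes, locally, the isometric immersion $J$ with $\hat g=J\circ\hat f$, and translating back, the conformal immersion $\Gamma$ with $g=\Gamma\circ f$ on $\mathcal U$.

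\emph{Main obstacle.} I expect the hard part to be Step 2, namely the reduction of the target of $\beta$ to $W^{(r,r)}$ with $r\le5$: Theorem 1.1 is available only for balanced index $r\le5$, whereas the naive comparison tensor lives in an unbalanced Lorentzian bundle of index about $d+3$, so one must identify exactly which umbilic and Schouten-type directions forced by the light-cone model, and which further directions forced by the conformal relation between $f$ and $g$, drop out of $\beta$; this is where the restriction $d\le5$ genuinely originates (and where the sharpness phenomenon of \cite{D-F} lurks). A second, more technical but essentially standard point is converting the degeneracy of $S(\bar\beta)$, through the Codazzi equation and the iteration on codimension, into an honest parallel splitting of $N_{\hat g}M$ with the correct umbilic behaviour of $\alpha^{\hat g}$ on the rank-one complement, while controlling the locus where ranks jump so that the composition is asserted only on the dense open subset $\mathcal U$.
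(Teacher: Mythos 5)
Your overall strategy --- lift to the light cone, form a flat comparison tensor, and invoke flat-bilinear-form machinery --- is the same as the paper's, but Step 2, which you yourself flag as the main obstacle, contains a genuine gap rather than a technicality. The paper's comparison tensor $\beta=\alpha^{f}\oplus\alpha^{G}$ lives in an \emph{unbalanced} space of type $(d+1,d+2)$ and dimension $2d+3$, and no quotient by ``coinciding scaffolding components'' reduces it to a balanced $W^{(r,r)}$ with $r\le d$: the component $\langle\alpha^{G},\xi\rangle$ along the timelike leg of the splitting $T_{G}^{\perp}M=T_{g}^{\perp}M\oplus I\!\!L^{2}$ is not determined by the metric and does not cancel against anything in $\alpha^{f}$. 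What the paper actually proves (Lemma 2.3) is that the isotropic subspace $S(\beta)\cap S(\beta)^{\perp}$ attains the maximal dimension $d+1$; this is done by splitting $\beta=\omega_{1}\oplus\omega_{2}$ with $\omega_{1}$ null and $\omega_{2}$ flat with $S(\omega_{2})$ nondegenerate in a space of type $(d-\ell+1,d-\ell+2)$, and then invoking an \emph{unbalanced} variant of Theorem 1.1 (Lemma 2.4, with nullity bound $\dim V_{1}-(r+t)$), not Theorem 1.1 itself. Your proposal never supplies the reduction to the balanced case, so Theorem 1.1 cannot be applied as you state; moreover your inequality $\dim N(\bar\beta_{p})\le\nu_{d}^{c}(p)$ is unjustified --- the link between the comparison tensor and the conformal $s$-nullities is extracted direction by direction from specific shape-operator identities, and the hypothesis is needed for every $1\le s\le d$ at several distinct points of the argument (Lemmas 2.6, 2.9, 2.10 and Case II), not just for $s=d$ at a first stage.

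A second gap is the endgame. The paper does not iterate a codimension-lowering step; it derives in one pass the normal form $\alpha^{G}=-\langle\,,\,\rangle\mu_{o}+\sum_{j}\langle\alpha^{f},\zeta_{j}\rangle\mu_{j}+\langle A^{G}_{\mu_{d+1}}\cdot,\cdot\rangle\mu_{d+1}$ with $\mathrm{rank}\,A^{G}_{\mu_{d+1}}\le1$, and then splits into two cases on the open dense set where $\dim S(\alpha^{G})$ is locally constant. Your sketch covers only the analogue of Case I ($\mathrm{rank}\,A^{G}_{\mu_{d+1}}=1$, where one proves smoothness and partial parallelism of $\mu_{o}$ and $\mu_{d+1}$ and builds an isometric map $\mathcal{T}$ between tubular neighbourhoods); it omits Case II, where $A^{G}_{\mu_{d+1}}\equiv0$, $\mu_{d+1}$ is shown to be a constant vector, $g$ reduces codimension to $\Bbb R^{n+d}$, and the conclusion requires the conformal rigidity theorem of do Carmo--Dajczer and its extension to $d=5$, rather than the fundamental theorem of submanifolds. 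Without handling that case the composition $g=\Gamma\circ f$ is not established on all of $\mathcal{U}$.
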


Observing that Theorem 1.2 is local one, the following result is an immediate consequence of Theorem 1.2.

\begin{theorem} Let $f\;\colon M^n\to N_{cf}^{n+d}$ be an immersion with $d\leq 5$ and $\,n > 2d+3$.  Assume that everywhere $\,\nu_{s}^{c}\leq n-2s-2,$ $1\leq s \leq d.$ If $g\colon\;M^n\to N_{cf}^{n+d+1}$ is an immersion conformal to $f$, then there exists an open dense subset $\mathcal{U}$ of $M^n$ such that $g$ restricted to $\mathcal{U}$ is locally a composition $g=\Gamma\circ f$  for some local conformal immersion $\Gamma$ from an open subset of $N_{cf}^{n+d}$ into an open subset of $N_{cf}^{n+d+1}$.\end{theorem}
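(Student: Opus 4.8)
The plan is to obtain Theorem 1.3 from Theorem 1.2 by passing to local conformal charts of the two ambient spaces; as the paper already observes, Theorem 1.2 is a local statement, so this reduction should be essentially formal. The conclusion only asserts the existence of an open dense subset of $M^n$ in a neighborhood of each point of which $g$ factors as $g=\Gamma\circ f$. Hence it suffices to show that inside every nonempty open subset of $M^n$ there is a nonempty open set on which $g$ is, near each of its points, such a composition; the set of all points with this local factorization property is then automatically open, and the preceding statement says it is dense.

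First I would fix a nonempty open set $V\subseteq M^n$ and a point $p\in V$. By definition of $N_{cf}^{n+d}$ and $N_{cf}^{n+d+1}$ there are conformal diffeomorphisms $\varphi$ of an open neighborhood of $f(p)$ in $N_{cf}^{n+d}$ onto an open subset of $\Bbb R^{n+d}$, and $\psi$ of an open neighborhood of $g(p)$ in $N_{cf}^{n+d+1}$ onto an open subset of $\Bbb R^{n+d+1}$. Shrinking $V$, I may assume $f(V)$ lies in the domain of $\varphi$ and $g(V)$ in the domain of $\psi$, so that $\hat f:=\varphi\circ f|_V$ and $\hat g:=\psi\circ g|_V$ are immersions of the $n$-manifold $V$ into $\Bbb R^{n+d}$ and $\Bbb R^{n+d+1}$ respectively.

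Next I would verify that the pair $(\hat f,\hat g)$ meets the hypotheses of Theorem 1.2. Conformality of induced metrics is transitive and is preserved under composition with the conformal diffeomorphisms $\varphi$ and $\psi$, so $\hat g$ is conformal to $\hat f$; the numerical assumptions $d\le 5$ and $n>2d+3$ are inherited; and, since the conformal $s$-nullity $\nu_s^c$ is a conformal invariant of an immersion — in particular unaffected when the ambient metric is replaced by a conformally equivalent one — the bound $\nu_s^c\le n-2s-2$, $1\le s\le d$, persists for $\hat f$ on $V$. Theorem 1.2 then yields an open dense $\mathcal{U}_V\subseteq V$ such that $\hat g|_{\mathcal{U}_V}$ is, near each point, a composition $\hat g=\hat\Gamma\circ\hat f$ with $\hat\Gamma$ a local conformal immersion between open subsets of $\Bbb R^{n+d}$ and $\Bbb R^{n+d+1}$. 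Setting $\Gamma:=\psi^{-1}\circ\hat\Gamma\circ\varphi$, which is a local conformal immersion from an open subset of $N_{cf}^{n+d}$ into an open subset of $N_{cf}^{n+d+1}$ as a composition of conformal maps, gives $g=\Gamma\circ f$ on the corresponding neighborhood. Thus every point of the nonempty set $\mathcal{U}_V\subseteq V$ has the local factorization property, which, $V$ being arbitrary, finishes the proof.

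I do not anticipate a genuine obstacle here: all the analytic content sits in Theorem 1.2, and hence ultimately in the basic lemma (Theorem 1.1). The only points deserving a sentence of justification are the conformal invariance of $\nu_s^c$ under the change of ambient chart, so that the nullity hypothesis genuinely transfers to $\hat f$, and the elementary remark that a property holding on an open dense subset of each member of an open cover of $M^n$ holds on an open dense subset of $M^n$.
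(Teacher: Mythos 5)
Your proposal is correct and follows exactly the route the paper intends: the paper dispenses with Theorem 1.3 in one line as an ``immediate consequence'' of the local Theorem 1.2, and your argument simply makes explicit the passage to local conformal charts, the conformal invariance of $\nu_s^c$, and the patching of the locally obtained dense open sets. Nothing is missing.
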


\section{Proof of Theorem 1.2}

For a symmetric bilinear form $\beta\colon\;V\times V\to W$ we denote by $\,S(\beta )\,$ the subspace of $\,W\,$ given by 
$$ \,S(\beta ) =\,\mbox{span}\,\{\,\beta (X,Y)\,:\,X,Y \in V\,\}, $$ 
and by $\,N(\beta)\,$ the nullity space of
$\,\beta\,$ defined as 
$$ \,N(\beta) =\, \{\,n \in V \,:\, \beta (X,n)=0,\,\,\forall\, X \in V\,\}. $$

For an immersion $f\colon\;M^n\to\tilde{M}^{n+d}$ into a Riemannian manifold we denote by $\,\alpha^f\; \colon TM\times TM \to T^{\perp}M$ its vector valued second fundamental form and by $T_{f(p)}^{\perp}M$ the normal space of $f$ at $p\in M^n$. Given an $\,s$-dimensional subspace $\,U^{s} \subseteq T_{f(p)}^\perp M ,$ $1\leq s \leq d,$ consider the symmetric bilinear form 
$$\alpha_{U^{s}}^{f}\colon\;T_pM\times T_pM\to U^s$$ 
defined as $\,\alpha_{U^{s}}^{f}=P \circ \alpha^f,$ where $\,P\,$ denotes the orthogonal projection of $T_{f(p)}^\perp $ onto $\,U^{s}$. The {\it conformal} {\it $\,s$-nullity} $\,\nu_s^c(p)\,$ of $\,f\,$ at $\,p$, $1\leq s\leq d$, is the integer 
$$
\nu_{s}^{c}(p)=\max_{U^{s} \subseteq T_{f(p)}^\perp ,\,\eta \in U^{s}}\left\{\mbox{dim}\,N\left(\alpha_{U^{s}}^{f}-\langle\,\,,\,\,\rangle \eta\right)\right \} 
$$

The Lorentz space $\,I\!\!L^{k},\,k\geq 2,$ is the Euclidean space $\,\Bbb R^{k}\,$ endowed with the metric $\,\langle\,\,,\,\,\rangle\,$ defined by 
$$ 
\langle X,X \rangle = -x_{1}^{2}+x_{2}^{2}+\cdots + x_{k}^{2} 
$$ 
for $\,X=(x_{1},x_{2}, \cdots , x_{k}).$  
The {\it light cone} $\,\mathcal{V}^{k-1}\,$ is the degenerate totally umbilical hypersurface of $\,I\!\!L^{k}\,$ given by 
$$ 
\mathcal{V}^{k-1}=\{\,X \in 
I\!\!L^{k}\,:\,\langle X,X \rangle = 0\;,\; X \neq 0\,\}. 
$$ 
For $k\geq 3$ and $\,\zeta \in \mathcal{V}^{k-1}\,$ consider the hyperplane
$$ 
H_{\zeta}=\left\{\,X \in I\!\!L^{k}\,:\,\langle X,\zeta \rangle = 1\right\} 
$$ 
and the $\,(k-2)$-dimensional simply connected embedded submanifold $\,H_{\zeta} \cap \mathcal{V}^{k-1}\subset I\!\!L^{k}.$ Note that $H_\zeta$ intersects only one of the two connected components of $\mathcal{V}^{k-1}.$ More precisely, $\,H_{\zeta}$ is in the Euclidean sense parallel to $\zeta$ and it does not pass through the origin. Given $p\in H_{\zeta} \cap \mathcal{V}^{k-1}$ the normal space of this intersection in $\,I\!\!L^{k}\,$ is the Lorentzian plane $\,I\!\!L^{2}\,$ generated by $\,p\,$ and $\,\zeta.$ Consequently, the metric induced by $\,I\!\!L^{k}\,$ on $\,H_{\zeta} \cap \mathcal{V}^{k-1}\,$  is Riemannian. Its second fundamental form is given by
$$ \alpha =-\langle\,\,,\,\,\rangle \zeta\,. $$ 
The Gauss equation for the inclusion $\,H_{\zeta} \cap \mathcal{V}^{k-1}\subset I\!\!L^{k}$ shows that $\,H_{\zeta} \cap \mathcal{V}^{k-1}$ is  flat and, consequently, is the image of an isometric embedding
$\,J_\zeta\colon\;\Bbb R^{k-2}\to H_\zeta \cap \mathcal{V}^{k-1}$. 

Now suppose that $M^n$ is a Riemannian manifold and $h\colon\;M^n\to\Bbb R^{k-2}$ is a conformal immersion with conformal factor $\phi_h$, that is, $\langle {h}_{\ast}X,{h}_{\ast}Y \rangle =\phi_{h}^{2} \langle X,Y \rangle $ over $M^n,$ where $X,\,Y$ are vectors tangent to $M^n$ and $\phi_h$ is a positive differentiable real function on $M^n$. We associate to $\,h\,$ the isometric immersion $H\colon\;M^n\to \mathcal{V}^{k-1}\subset I\!\!L^k\,$ by setting 
$$
H=\frac{1}{\phi_h}J_\zeta\circ h 
$$ 
for a chosen $\,\zeta \in \mathcal{V}^{k-1}.$ 

Consider $M^n$ with the metric induced by $f$, a fixed $\,\zeta$ in $\mathcal{V}^{n+d+1}$ and an isometric embedding $J_\zeta\colon\Bbb R^{n+d}\to H_\zeta \cap \mathcal{V}^{n+d+1}$. The isometric immersion $F\colon\;M^n\to \mathcal{V}^{n+d+1}\subset I\!\!L^{n+d+2}\,$ associated to $f$ is given by
$$
F=J_\zeta \circ f.
$$
Its second fundamental form in $I\!\!L^{n+d+2}$ is the symmetric bilinear form
\begin{equation}
\alpha^F=-\langle\,,\,\rangle\zeta +\alpha^f.\label{1}
\end{equation}
Here, we are identifying the second fundamental form $\alpha^f$ of $f$ in $\Bbb R^{n+d}$ with the symmetric bilinear form $\left(J_\zeta\right)_*\alpha^f .$

Now let $g\colon\;M^n\to \Bbb R^{n+d+1}$ be an immersion conformal to $f.$ Consider the isometric immersion $G\colon\;M^n\to \mathcal{V}^{n+d+2}\subset I\!\!L^{n+d+3}$ given by
\begin{equation}
G=\frac{1}{\phi_{g}}J_{\overline{\zeta}}\circ g \label{2}
\end{equation} 
for an fixed $\,\overline{\zeta} $ in $\mathcal{V}^{n+d+2},$ where $J_{\overline{\zeta}}\colon\;\Bbb R^{n+d+1}\to H_{\overline{\zeta}} \cap \mathcal{V}^{n+d+2}$ is an isometric embedding. Taking the derivative of $\,\langle G , G\rangle =0$, we see that the null vector field $\,G\,$ is normal to the immersion $\,G.$ The normal field $\,G\,$ also satisfies $\,A_{G}^{G}=-I.$ The normal bundle of $\,G\,$ is given by the orthogonal direct sum 
$$
T_{G}^{\perp}M=T_{g}^{\perp}M \oplus I\!\!L^{2} 
$$ 
where $\,T_g^{\perp}M\,$ is identified with $\,(J_{\overline{\zeta}})_{\ast}T_g^{\perp}M\,$ and $\,I\!\!L^{2}\,$ is a Lorentzian plane bundle that contains $\,G$. There exists an unique orthogonal basis $\,\{ \xi ,\eta \}\,$ of $\,I\!\!L^{2},$ satisfying $\,| \xi |^{2}=-1,$ $\,| \eta |^{2}=1$ and 
$$
G = \xi + \eta.
$$ 
Writing $\,\alpha^{G}\,$ in terms of this orthogonal frame we obtain  
$$ 
\alpha^{G}=- \left\langle \alpha^{G} , \xi\right\rangle \xi + \left\langle \alpha^{G} , \eta \right\rangle \eta + \left(\alpha^G\right)^{\ast} 
$$
where $\,\left(\alpha^G\right)^{\ast}=(1/\phi_{g})(J_{\overline{\zeta}})_{\ast}\alpha^{g}\,$ is the $\,T_{g}^{\perp}M$-component of $\,\alpha^{G}.$

Given an $\,m$-dimensional real vector space $\,W\,$ endowed with a nondegenerate inner product $\,\langle\,\,,\,\,\rangle\,$ of {\it index\/} $\,r,$ that is, the maximal dimension of a subspace of $\,W\,$ where
$\,\langle\,\,,\,\,\rangle\,$ is negative definite, we say that $\,W\,$ is of type $\,(r,q)\,$ and we write $\,W^{(r,q)}\,$ with $\,q=m-r.$  

At $\,p \in M^{n},$ let 
$$ 
W_p=T_{f(p)}^\perp M \oplus \mbox{span}\{\xi (p) \} \oplus \mbox{span}\{\eta (p)\}\oplus T_{g(p)}^\perp M 
$$ 
be endowed with the natural metric of type $\,(d+1,d+2)\,$ which is negative definite on $\,T_{f(p)}^\perp M \oplus \mbox{span}\{\xi (p) \}.$ We also define the symmetric tensor  $\beta\colon\;TM\times TM\to W$ setting $\,\beta=\alpha^{f} + \alpha^{G}\,$, that is, 
$$ 
\beta = \alpha^{f} - \left\langle \alpha^{G} , \xi \right\rangle \xi + \left\langle \alpha^{G} , \eta \right\rangle \eta + \left(\alpha^G\right)^{\ast}.  
$$ 
The Gauss equations for $\,f\,$ and $\,G\,$ imply that $\,\beta\,$ is {\it flat}, that is, 
$$ 
\langle \beta (X,Y), \beta (Z,U) \rangle = \langle \beta (X,U), \beta (Y,Z) \rangle ,\;\;\;\;\forall\, X,Y,Z,U \in TM.  
$$ 
Observe also that $\,\beta (X,X) \neq 0\,$ for all $\,X \neq 0,$ because $\,A_{\xi + \eta}^{G}=-I.$

With the aim of constructing locally the conformal immersion $\Gamma$ we construct locally an isometric immersion $\mathcal{T}$ of a neighborhood of $L^{n+d+2}$ into a neighborhood of $L^{n+d+3}$ such that $\,G=\mathcal{T}\circ F.$ This $\,\mathcal{T}\,$ induces a conformal immersion $\Gamma$ from an open subset of $\Bbb R^{n+d}$ into an open subset of $\Bbb R^{n+d+1}$ defined by 
$$ 
J_{\overline{\zeta}}\circ \Gamma =\frac{\mathcal{T}\circ J_{\zeta}}{\langle \mathcal{T}\circ J_{\zeta},\overline{\zeta}\rangle} 
$$ 
which satisfies $\,g=\Gamma \circ f$. Now we will construct locally the isometric immersion $\mathcal{T}.$

\begin{lemma} Given $p\in M^n,$ there exist an orthonormal basis $\zeta_1,\ldots ,\zeta_d$ of $\,T_{f(p)}^\perp M$ and a basis $G,\,\mu_{o},\ldots,\mu_{d}$, $\mu_{d+1}$ of $\,T_{G(p)}^{\perp}M,$ with 
$\,\langle G,\mu_{o}\rangle = 1,\,\langle \mu_{o},\mu_{o}\rangle =0,$ $\,\langle \mu_{o},\mu_{j}\rangle =0=\langle G,\mu_{j}\rangle ,$ $\,\langle \mu_{i},\mu_{j}\rangle =\delta_{ij},$ $1\leq i,\,j \leq d+1,$ such that
\begin{equation} 
\alpha^{G}=-\langle\,\,,\,\,\rangle \mu_{o} +\sum_{j=1}^{d}\left\langle \alpha^{f} , \zeta_{j}\right\rangle\mu_{j}+ \left\langle A_{\mu_{d+1}}^G(.)\,,\,.\, \right\rangle \mu_{d+1}, \label{3}
\end{equation}               
with $\mbox{rank}\,A_{\mu_{d+1}}^G\leq 1.$ Here, $\left\langle \alpha^{f} , \zeta_{j}\right\rangle$ is the inner product induced in $\,T_{f(p)}^{\perp}M$ by $\Bbb R^{n+d}$.
\end{lemma}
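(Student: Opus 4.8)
\smallskip
\noindent The plan is to extract formula~\eqref{3} from the flat symmetric bilinear form $\beta=\alpha^{f}+\alpha^{G}\colon T_{p}M\times T_{p}M\to W_{p}$ constructed above, the main tool being Theorem~1.1. I would first record the properties of $\beta$ that drive the argument: it is flat; it has trivial nullity $N(\beta)=\{0\}$, since $\beta(X,X)\neq 0$ for every $X\neq 0$; and, because $A^{G}_{G}=-I$, it satisfies the umbilical identity $\langle\beta(X,Y),G\rangle=-\langle X,Y\rangle$. The last identity already supplies the basis vector $G$ of the statement and identifies the coefficient $-\langle\,\,,\,\,\rangle$ of $\mu_{o}$ in~\eqref{3} (recall $\langle G,\mu_{o}\rangle=1$). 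Thus the real content of the lemma is that, after an orthogonal change of frame in $T^{\perp}_{f}M$ and a change of basis of $T^{\perp}_{G}M$, the $\mu_{j}$-components of $\alpha^{G}$ for $1\le j\le d$ equal the components of $\alpha^{f}$ along an orthonormal frame $\zeta_{1},\dots,\zeta_{d}$, there is one further null direction $\mu_{o}$, with $\langle G,\mu_{o}\rangle=1$, in which the second fundamental form of $G$ vanishes, and exactly one unit normal $\mu_{d+1}$ is left over.

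\smallskip
\noindent The heart of the proof is a finite chain of reductions of $\beta$ in the spirit of~\cite{C-D}. At each stage one has a flat symmetric bilinear form $\gamma$ with nowhere vanishing diagonal, mapping into a pseudo-Euclidean space of index at most $d\le 5$ (which, when the original index $d+1$ exceeds $5$, is first arranged by a preliminary reduction using the null umbilical direction $G$), and with $\dim N(\gamma)$ strictly below the do Carmo--Dajczer threshold $\dim T_{p}M-2s$; this last point is where the hypotheses $\nu^{c}_{s}\le n-2s-2$ and $n>2d+3$ enter, through the bounds they place on the relative nullities of the relevant projections of $\beta$ — here one uses that the identity $\langle\beta(X,Y),G\rangle=-\langle X,Y\rangle$ reproduces, on those projections, the umbilical term $-\langle\,\,,\,\,\rangle\eta$ appearing in the definition of conformal $s$-nullity. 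By Theorem~1.1 each such $\gamma$ has a nonzero null vector $e\in S(\gamma)\cap S(\gamma)^{\perp}$; since $e\in S(\gamma)^{\perp}$, $\gamma$ descends to a flat form on $e^{\perp}/\mbox{span}\{e\}$, of index one less and still with nonvanishing diagonal, and the process repeats. The bookkeeping is organized so that each $e$ extracted is, after normalization, either a vector $\zeta_{j}+\mu_{j}$ with $A^{G}_{\mu_{j}}=A^{f}_{\zeta_{j}}$ — which is exactly what $e\perp S(\gamma)$ asserts once the $T^{\perp}_{f}M$- and $T^{\perp}_{G}M$-components of $e$ are separated — or the distinguished direction $\mu_{o}$, recognized by lying entirely in $T^{\perp}_{G}M$ with $A^{G}_{\mu_{o}}=0$. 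After the requisite number of reductions the frame of $T^{\perp}_{f}M$ is exhausted and a single residual unit normal, $\mu_{d+1}$, remains.

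\smallskip
\noindent The rank bound then follows at once. With $\zeta_{1},\dots,\zeta_{d}$ and $G,\mu_{o},\mu_{1},\dots,\mu_{d+1}$ in place, so that $\alpha^{G}=-\langle\,\,,\,\,\rangle\mu_{o}+\sum_{j=1}^{d}\langle\alpha^{f},\zeta_{j}\rangle\mu_{j}+\omega\,\mu_{d+1}$ with $\omega:=\langle A^{G}_{\mu_{d+1}}(\cdot),\cdot\rangle$, a direct computation of $\langle\beta(X,Y),\beta(Z,U)\rangle$ — in which the negative-definite $T^{\perp}_{f}M$-contributions cancel the positive-definite $\mu_{j}$-contributions and the $\mu_{o}$-terms drop out because $\langle\mu_{o},\mu_{o}\rangle=0$ — leaves $\omega(X,Y)\omega(Z,U)$, so that flatness of $\beta$ becomes $\omega(X,Y)\omega(Z,U)=\omega(X,U)\omega(Z,Y)$ for all $X,Y,Z,U$, which is precisely $\mbox{rank}\,\omega\le 1$.

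\smallskip
\noindent I expect the middle step to be the main obstacle. One must run the chain of reductions while simultaneously controlling flatness and nonvanishing of the diagonal of each reduced form; the signature, which must be brought and kept to square type of index at most $5$ by means of the umbilical structure coming from $A^{G}_{G}=-I$ — this is precisely where the restriction $d\le 5$ is forced, exactly as in Theorem~1.1; and the nullities, which must remain below the do Carmo--Dajczer threshold, the sharper hypotheses $\nu^{c}_{s}\le n-2s-2$ and $n>2d+3$ being needed here (rather than the ones that suffice for rigidity) to absorb the unit of codimension by which $g$ exceeds $f$. In parallel one must maintain the combinatorial bookkeeping that recognizes each extracted null vector either as a matched pair $(\zeta_{j},\mu_{j})$ or as the vector $\mu_{o}$.
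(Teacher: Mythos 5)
Your overall strategy---exploit flatness of $\beta=\alpha^f+\alpha^G$, use the do Carmo--Dajczer machinery to produce isotropic vectors in $S(\beta)\cap S(\beta)^{\perp}$, and then read off the normal form---is indeed the paper's strategy, and your closing observation that, once (\ref{3}) is in place, flatness of $\beta$ collapses to $\omega(X,Y)\,\omega(Z,U)=\omega(X,U)\,\omega(Y,Z)$ and hence $\mbox{rank}\,A^G_{\mu_{d+1}}\leq 1$ is correct, and arguably cleaner than the paper's route (which instead shows $N(\omega_2)\subset \mbox{Ker}\,A^G_{\mu_{d+1}}$ with $\mbox{dim}\,N(\omega_2)\geq n-1$). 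But the middle step, which you yourself flag as the main obstacle, is where the lemma actually lives, and your sketch of it does not work as stated. The paper does not iterate quotient reductions $e^{\perp}/\mbox{span}\{e\}$: it proves in one stroke (Lemma 2.3) that $\mbox{dim}\,S(\beta)\cap S(\beta)^{\perp}=d+1$, the maximum possible in the $(d+1,d+2)$-space $W_p$. The first isotropic vector does not come from a ``preliminary reduction using $G$''---note $G\notin S(\beta)^{\perp}$, since $\langle \beta ,G\rangle=-\langle\,,\,\rangle\neq 0$, so one cannot quotient by it---but from $\beta(Z,Z)$ with $Z\in\mbox{ker}\,\beta(X)$ for a regular element $X$ (nontrivial kernel because $n>2d+3=\mbox{dim}\,W_p$); only then does the complementary flat piece $\omega_2$ have index $\leq d\leq 5$ so that Lemma 2.4 applies, and the conformal-nullity hypotheses are what push $\ell$ all the way to $d+1$ (via Assertion 3 of \cite{Si}). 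An iterated quotient scheme would moreover have to control the nonvanishing of the diagonal and the nullity of each reduced form, which you do not address.

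More seriously, your bookkeeping claim---that each extracted null vector is either of the matched form $\zeta_j+\mu_j$ or a vector with no $T_f^{\perp}M$-component playing the role of $\mu_o$---is false. Only $d-1$ of the $d+1$ isotropic directions can be taken of the clean form $\zeta_j+\vartheta_j$; the remaining two unavoidably mix all four factors $T_f^{\perp}M$, $\mbox{span}\{\xi\}$, $\mbox{span}\{\eta\}$, $T_g^{\perp}M$ through two angles $\theta,\varphi$ (Lemma 2.5), and extracting from them the null direction $\mu_o$ with $\langle G,\mu_o\rangle=1$ together with the residual unit normal $\mu_{d+1}$ is the explicit computation (\ref{8})--(\ref{12}), which also requires ruling out the degenerate configuration $\sin\theta\cos\varphi=-1$ (equivalently $G\in S(\beta)\cap S(\beta)^{\perp}$, contradicting $A^G_{G}=-I$). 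None of this appears in your outline, so the construction of the bases $\zeta_1,\ldots,\zeta_d$ and $G,\mu_o,\ldots,\mu_{d+1}$---the actual content of the lemma---remains unproved.
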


With respect to flat symmetric bilinear forms we need the following from \cite{Da}:

\vspace{0.2cm}

For $p\in M^n,$ set $\,V\colon =T_{p}M\,$ and for each $\,X \in\,V\,$ define the linear map 
$$ \beta (X)\; \colon\; V \to W $$ 
by setting $\,\beta (X)(v)=\beta (X,v)\,$ for all $\,v \in V.$ The kernel and image of $\,\beta (X)\,$ are denoted by $\,\mbox{ker}\,\beta (X)\,$ and $\,\beta (X,V),$ respectively. We say that $\,X\,$ is a {\it regular element} of $\,\beta\,$ if
$$\mbox{dim}\,\beta (X,V)=\max_{Z\in V}\mbox{dim}\,\beta (Z,V).$$ 
The set of regular elements of $\,\beta\,$ is denoted by $\,RE(\beta ).$ For each $X\in V,$ set $U(X)=\beta (X,V)\cap \beta (X,V)^{\perp}$ and define
$$RE^{*}(\beta )=\{\,Y\in RE(\beta )\,:\,\mbox{dim}\,U(Y)=d_{0}\,\}$$
where $d_{0}=\min\{\,\mbox{dim}\,U(Y)\,:\,Y\in RE(\beta )\}.$

\begin{lemma} The set $\,RE^{*}(\beta )\,$ is open and dense in $\,V\,$ and 
$$
\beta (\mbox{ker}\,\beta (X),V) \subset U(X),\;\;\;\;\forall\, X \in RE(\beta ).
$$
\end{lemma}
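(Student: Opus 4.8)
The plan is to prove the two assertions of the lemma in turn. For the topological statement, fix bases of $V$ and $W$, so that $X\mapsto\beta(X)$ is represented by a matrix whose entries are linear in the coordinates of $X$ and $\dim\beta(X,V)$ is its rank. Since the rank of a matrix is lower semicontinuous, $RE(\beta)$, the locus where this rank attains its maximum value $k$, is open; and it is the complement of the common zero set of the $k\times k$ minors, which are polynomials not all identically zero because $RE(\beta)\neq\emptyset$, so $RE(\beta)$ is dense as well. Over $RE(\beta)$ the assignment $X\mapsto\beta(X,V)$ defines a smooth rank-$k$ subbundle of the trivial bundle $V\times W$, on which the ambient inner product restricts to a smooth field of symmetric bilinear forms whose radical at $X$ is precisely $U(X)$; hence $\dim U(X)$ equals $k$ minus the rank of that field and is upper semicontinuous on $RE(\beta)$. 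Consequently $RE^{*}(\beta)=\{X\in RE(\beta):\dim U(X)\le d_{0}\}$ is open, and a further minor argument applied to the field of forms yields its density. I expect this part to be essentially routine.

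For the inclusion, fix $X\in RE(\beta)$, $T\in\ker\beta(X)$ and $Z\in V$; the goal is $\beta(T,Z)\in\beta(X,V)\cap\beta(X,V)^{\perp}$. Evaluating the flatness identity with $(X,T,Z,W)$ in place of $(X,Y,Z,U)$ and using $\beta(X,T)=0$ gives $\langle\beta(X,W),\beta(T,Z)\rangle=0$ for every $W\in V$, that is, $\beta(T,Z)\in\beta(X,V)^{\perp}$. To obtain the other half, $\beta(T,Z)\in\beta(X,V)$, I would use regularity through a perturbation of $X$ in the direction $Z$: since $RE(\beta)$ is open, $X_{\epsilon}:=X+\epsilon Z$ belongs to $RE(\beta)$ for $|\epsilon|$ small, so $\dim\beta(X_{\epsilon},V)=k$. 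Choosing $Z_{1},\dots,Z_{k}$ such that $\beta(X,Z_{1}),\dots,\beta(X,Z_{k})$ form a basis of $\beta(X,V)$, the vectors $\beta(X_{\epsilon},Z_{i})=\beta(X,Z_{i})+\epsilon\,\beta(Z,Z_{i})$, $i=1,\dots,k$, stay linearly independent for small $\epsilon$ and therefore span all of $\beta(X_{\epsilon},V)$; meanwhile $\beta(X_{\epsilon},T)=\beta(X,T)+\epsilon\,\beta(Z,T)=\epsilon\,\beta(T,Z)$ shows $\beta(T,Z)\in\beta(X_{\epsilon},V)$ for every small $\epsilon\neq0$. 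As $\epsilon\to0$ the $k$-planes $\beta(X_{\epsilon},V)$ converge, along with their spanning frame, to $\beta(X,V)$; since the fixed vector $\beta(T,Z)$ lies in each of them, it lies in the limit, so $\beta(T,Z)\in\beta(X,V)$. Combined with the previous observation, $\beta(T,Z)\in U(X)$, which proves the lemma.

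I expect the main obstacle to be this final limiting argument, in particular getting the right perturbation. Perturbing $X$ in the direction of $T\in\ker\beta(X)$ only delivers $\beta(T,Z)\in\beta(X,V)$ when $Z$ itself lies in $\ker\beta(X)$; it is the perturbation in the direction $Z$ that handles arbitrary $Z$, and the price is that one must check that the constant vector $\beta(T,Z)$ genuinely survives in the limit plane $\beta(X,V)$ rather than escaping as $\epsilon\to0$. This uses the constancy of $\dim\beta(X_{\epsilon},V)$ along the perturbation (which forces the perturbed frame to exhaust $\beta(X_{\epsilon},V)$) together with the continuity on $RE(\beta)$ of the Grassmannian-valued map $Y\mapsto\beta(Y,V)$, and ultimately rests on the openness of $RE(\beta)$ established in the first part.
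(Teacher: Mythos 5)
The paper never proves this lemma: it is quoted verbatim from \cite{Da}, so there is no in-paper argument to compare against. Your proof is correct, and it is essentially the classical argument (Moore, do Carmo--Dajczer) that the cited source gives. Both substantive steps check out: applying flatness to $(X,T,Z,W)$ with $\beta(X,T)=0$ yields $\beta(T,Z)\in\beta(X,V)^{\perp}$; and the perturbation $X_{\epsilon}=X+\epsilon Z$ --- correctly taken in the direction $Z$, not $T$ --- gives $\beta(X_{\epsilon},T)=\epsilon\,\beta(T,Z)$ together with $\beta(X_{\epsilon},V)=\mathrm{span}\{\beta(X,Z_{i})+\epsilon\,\beta(Z,Z_{i})\}$ by maximality of the rank, so that letting $\epsilon\to 0$ in the Grassmannian forces $\beta(T,Z)\in\beta(X,V)$. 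The one point you leave compressed is the density of $RE^{*}(\beta)$. To close it, fix $Y_{0}$ with $\dim U(Y_{0})=d_{0}$ and $Z_{1},\dots,Z_{k}$ with $\beta(Y_{0},Z_{i})$ a basis of $\beta(Y_{0},V)$; on the dense open set where the $\beta(Y,Z_{i})$ remain independent they span $\beta(Y,V)$, the entries of the Gram matrix $\bigl(\langle\beta(Y,Z_{i}),\beta(Y,Z_{j})\rangle\bigr)$ are polynomials in $Y$, and its $(k-d_{0})\times(k-d_{0})$ minors are not all identically zero because one of them is nonzero at $Y_{0}$; hence they cannot vanish on any nonempty open subset, which gives points of $RE^{*}(\beta)$ in every open set. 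With that detail supplied, the proof is complete and matches the standard one.
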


\noindent{Recall} that a vector subspace $\,L\,$ of $\,W\,$ is said to be  {\it degenerate} if $\,L\cap L^{\perp}\neq \{0\}\,$ and {\it isotropic} when $\,\langle L,L\rangle =0.$ We also have that 
\begin{equation}
\mbox{dim}\,L+\mbox{dim}\,L^{\perp}=\mbox{dim}\,W \;\;\;\;\;\mbox{and}\;\;\;\;\; L^{\perp \perp}=L.\label{4} 
\end{equation}  
Notice that $L$ is an isotropic subspace of $W$ if and only if $L\subset L^{\perp}.$ In this case, it follows easily from (\ref{4}) that $L$ has dimension at most $d+1.$ Recall that $W_p$ is a $(2d+3)$-dimensional vector space. Since $U(X)$ is isotropic by definition, we have that $d_{0}\leq d+1.$

Before proving Lemma 2.1 we need several lemmas:

\begin{lemma} In an arbitrary $p\in M^n$ it holds that $\,\mbox{dim}\,S(\beta )\cap S(\beta )^{\perp}= d+1.$\end{lemma}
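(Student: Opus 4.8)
The plan is to prove the two inequalities $\mbox{dim}\,(S(\beta)\cap S(\beta)^{\perp})\leq d+1$ and $\mbox{dim}\,(S(\beta)\cap S(\beta)^{\perp})\geq d+1$ separately. The first is immediate: if $L:=S(\beta)\cap S(\beta)^{\perp}$, any two vectors of $L$ lie, one in $S(\beta)$ and the other in $S(\beta)^{\perp}$, hence are orthogonal, so $L$ is isotropic; since $W_p$ has dimension $2d+3$ and index $d+1$, it follows from (\ref{4}) that $\mbox{dim}\,L\leq d+1$.

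For the reverse inequality I would first extract three consequences of the construction of $\beta$. From $A_{G}^{G}=-I$ we get $\langle\beta(X,Y),G\rangle=\langle\alpha^{G}(X,Y),G\rangle=-\langle X,Y\rangle$ for all $X,Y\in T_pM$; in particular $N(\alpha^{G})=\{0\}$, so $N(\beta)=N(\alpha^{f})\cap N(\alpha^{G})=\{0\}$, and more strongly $\beta(X,Z)=0$ for all $X$ forces $Z=0$. Second, for an orthonormal basis $\zeta_{1},\dots,\zeta_{d}$ of $T_{f(p)}^{\perp}M$ one has $\langle\beta(X,Y),\zeta_{j}\rangle=-\langle A_{\zeta_{j}}^{f}X,Y\rangle$. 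Third, combining these with $\nu_{1}^{c}(p)\leq n-4$, I would show that the $(d+1)$-dimensional subspace $P=\mbox{span}\{G,\zeta_{1},\dots,\zeta_{d}\}$ meets $S(\beta)^{\perp}$ only at $0$: if $w=c_{0}G+\sum_{j}c_{j}\zeta_{j}$ lay in $S(\beta)^{\perp}$, pairing $w$ with $\beta(X,Y)$ would give $A_{\sum_{j}c_{j}\zeta_{j}}^{f}=-c_{0}I$, and for $w\neq 0$ this produces a line $U^{1}\subseteq T_{f(p)}^{\perp}M$ and $\eta\in U^{1}$ with $\alpha_{U^{1}}^{f}=\langle\,\,,\,\,\rangle\,\eta$, so $\mbox{dim}\,N(\alpha_{U^{1}}^{f}-\langle\,\,,\,\,\rangle\eta)=n$, contradicting $\nu_{1}^{c}(p)\leq n-4$. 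Hence $\mbox{dim}\,S(\beta)\geq d+1$, i.e. $\mbox{dim}\,S(\beta)^{\perp}\leq d+2$.

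Suppose, for a contradiction, that $\nu:=\mbox{dim}\,L\leq d$. As long as the radical $L$ of $S(\beta)$ contains a nonzero null vector $w_{0}$ — chosen orthogonal to $G$ whenever possible — I would pick $w_{0}^{\ast}$ with $\langle w_{0},w_{0}^{\ast}\rangle=1$ and replace $\beta$ by $\hat{\beta}:=\beta-\langle\beta,w_{0}^{\ast}\rangle w_{0}$. Since $w_{0}$ is null and lies in $S(\beta)^{\perp}$, $\hat{\beta}$ is valued in $\{w_{0},w_{0}^{\ast}\}^{\perp}$, is again flat and symmetric, satisfies $\mbox{dim}\,S(\hat{\beta})=\mbox{dim}\,S(\beta)-1$, and its image has radical of dimension $\nu-1$, while the ambient indefinite signature drops by $(1,1)$; moreover, if $w_{0}\perp G$ then $G$ survives and $\langle\hat{\beta}(X,Y),G\rangle=-\langle X,Y\rangle$ still holds. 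Iterating $\nu$ times yields a nonzero flat symmetric bilinear form $\overline{\beta}\colon\;T_pM\times T_pM\to\overline{W}$ with $\overline{W}$ of type $(d+1-\nu,d+2-\nu)$, $S(\overline{\beta})$ nondegenerate, and (when $G$ has been preserved) $\langle\overline{\beta}(X,Z),G\rangle=-\langle X,Z\rangle$, which forces $N(\overline{\beta})=\{0\}$ by the first observation above. Adjoining one negative direction to balance $\overline{W}$ to type $(d+2-\nu,d+2-\nu)$ and invoking Theorem 1.1 — whose hypothesis $r\leq 5$ is exactly where $d\leq 5$ is used — gives $0=\mbox{dim}\,N(\overline{\beta})\geq n-2(d+2-\nu)$, that is $n\leq 2d+4-2\nu$; since $n>2d+3$ this forces $\nu\leq 0$, contradicting $\nu\geq 1$. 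What remains are the case $\nu=0$, handled directly with $\beta$ using $N(\beta)=\{0\}$, and the configurations in which $G$ cannot be kept orthogonal to the radical at the last reduction step or in which the balanced index would exceed $5$; these I would dispose of by direct arguments, using $\beta(X,X)\neq 0$, the structure of $\alpha^{G}$ and of $(\alpha^{G})^{\ast}=(1/\phi_{g})(J_{\overline{\zeta}})_{\ast}\alpha^{g}$, and Lemma 2.2 to control the kernels $\mbox{ker}\,\beta(X)$ for $X\in RE^{*}(\beta)$. Combined with the first paragraph, this gives $\mbox{dim}\,(S(\beta)\cap S(\beta)^{\perp})=d+1$.

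The main obstacle I anticipate is precisely this last point: organizing the reduction so that the ambient index stays within the range $\leq 5$ where Theorem 1.1 applies (this is the source of the restriction $d\leq 5$) and so that the light-cone identity $\langle\overline{\beta}(X,Z),G\rangle=-\langle X,Z\rangle$ is not destroyed, and then disposing of the few borderline and degenerate cases, where the crude dimension estimate from Theorem 1.1 must be supplemented by the explicit form of $\beta$ and by Lemma 2.2.
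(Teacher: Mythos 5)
Your upper bound $\mbox{dim}\,(S(\beta)\cap S(\beta)^{\perp})\leq d+1$ and the observation that $\mbox{span}\{G,\zeta_1,\ldots,\zeta_d\}$ meets $S(\beta)^{\perp}$ trivially are both correct, and your reduction scheme (splitting off a hyperbolic plane for each null direction of the radical and applying a flat-bilinear-form theorem to what remains) has the same architecture as the paper's, which performs the splitting in one step via the decomposition $W_p=W_1^{(\ell,\ell)}\oplus W_2^{(d-\ell+1,d-\ell+2)}$ and applies such a theorem to the $W_2$-component $\omega_2$. But the three issues you defer to ``direct arguments'' are not edge cases; they are the substance of the proof, and as written your argument does not close. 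First, you invoke the balanced Theorem 1.1 after padding $\overline{W}$ to type $(d+2-\nu,d+2-\nu)$, which requires $d+2-\nu\leq 5$ and already fails for $d=5$, $\nu\leq 1$ (and for $\nu=0$ unless $d\leq 3$). The paper avoids this by using the unbalanced Lemma 2.4 for forms into $W^{(r,t)}$ with the bound $\mbox{dim}\,V_1-(r+t)$, which only needs $r=d-\ell+1\leq 5$, automatic once $\ell\geq 1$ and $d\leq 5$. Second, the case $\nu=0$ cannot be ``handled directly using $N(\beta)=\{0\}$'': for $d=5$ the form $\beta$ takes values in a space of index $6$, where no version of the flat-form lemma is available (and by Dajczer--Florit none can be). The paper instead proves $\ell\geq 1$ by the regular-element argument: $n>2d+3$ forces $\mbox{ker}\,\beta(X)\neq 0$ for $X\in RE(\beta)$, Lemma 2.2 places $\beta(Z,V)$ inside the isotropic space $U(X)$, and $\beta(Z,Z)\neq 0$ then exhibits a nonzero vector of $S(\beta)\cap S(\beta)^{\perp}$. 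You mention Lemma 2.2 only as a fallback, but it is the only tool that works at this point.

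Third, and most seriously, the configuration in which $G$ cannot be kept orthogonal to the stripped-off null direction is not exceptional: it necessarily occurs at the last reduction step. Indeed $\langle\beta(X,X),G\rangle=-|X|^2$ shows $G$ is never orthogonal to $S(\beta)$, and the explicit description obtained later in Lemma 2.5 shows that the radical contains the vector $\xi-\eta\sin\theta\cos\varphi+\vartheta_{d+1}\cos\theta\cos\varphi+\vartheta_1\sin\varphi$, whose pairing with $G=\xi+\eta$ equals $-1-\sin\theta\cos\varphi\neq 0$; hence $L\not\subseteq G^{\perp}$, the final reduction destroys the identity $\langle\overline{\beta}(X,Z),G\rangle=-\langle X,Z\rangle$, and the conclusion $N(\overline{\beta})=\{0\}$ on which your numerical contradiction rests is not available. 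This is precisely where the paper's proof switches mechanisms (quoting Assertion 3 of \cite{Si}): the restriction of $\beta$ to the large nullity space $N(\omega_2)$ is the null form $\omega_1$, and decomposing its values produces a subspace $U^{s}\subseteq T_{f(p)}^{\perp}M$ and a vector $\eta$ for which $\mbox{dim}\,N\left(\alpha_{U^{s}}^{f}-\langle\,\,,\,\,\rangle\eta\right)$ is too large, contradicting $\nu_{s}^{c}\leq n-2s-2$. Your proposal never uses the conformal $s$-nullity hypotheses for $s\geq 2$, which is a reliable sign that the decisive step is missing.
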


\begin{proof} First we prove the following assertion

\vspace{0.2cm}

\noindent{\bf Assertion.} {\em There exist an orthogonal decomposition
$$
\,W_p=W_{1}^{(\ell ,\ell)}\oplus W_{2}^{(d-\ell+1\,,\,d-\ell+2)},\;\;\ell \geq 1\;,
$$ 
and symmetric bilinear forms $\omega_j\colon\;V\times V\to W_j$, $j=1,\,2,$ satisfying 
$$ 
\beta =\omega_1 \oplus\omega_2 
$$
such that: 

i) $\,\omega_1\,$ is nonzero and null with respect to $\,\langle\,\,,\,\,\rangle,\,$ that is, 
$$\langle \omega_1 (X,Y), \omega_1 (Z,U) \rangle = 0,\;\forall\, X,Y,Z,U \in V=T_pM,$$ 
and

ii) $\,\omega_2\,$ is flat with $\,\mbox{dim}\,N\left(\omega_2\right) \geq n-\mbox{dim}\,W_2.$}

\vspace{0.2cm}

\noindent{Notice that} $\,\beta (Z,Z)\neq 0\,$ for all $Z\neq 0$ in $TM.$ Given $\,X\in RE(\beta ),$ there exists $\,Z\neq 0\,$ such that $\,Z\in \mbox{ker}\,\beta (X)$ due to $\,n>2d+3.$  Since $\,\beta\,$ is flat, $\,\beta (Z,V)\subset U(X)$ by Lemma 2.2 and $\,U(X)\,$ is isotropic, it holds that $\,\beta (Z,Z)\in S(\beta )\cap S(\beta )^{\perp}.$ Let $\,\upsilon_{i}=\gamma_{i}+b_{i}\xi +c_{i}\eta +\delta_{i},\,1 \leq i \leq \ell,$ with $\,\gamma_{i}\in T_{f}^{\perp}M\,$ and $\,\delta_{i}\in
T_{g}^{\perp}M,$ be a basis of the isotropic subspace $\,S(\beta )\cap S(\beta )^{\perp}.$ The vectors $\gamma_{i}+b_{i}\xi ,\,1\leq i\leq \ell ,$ are linearly independent. Otherwise, there are real numbers $\rho_i,$ with some of them different of zero, such that $\sum_{i}\rho_i\,(\gamma_{i}+b_{i}\xi )=0.$ Since the vectors $\upsilon_i,\,1\leq i\leq \ell ,$ are linearly independent, we have $\sum_i\,\rho_i\,(c_{i}\eta +\delta_{i})=\sum_{i}\rho_i\,\upsilon_i\neq 0.$ The metric in $\mbox{span}\,\{\eta\}\oplus T_g^\perp M$ being positive definite, we have obtained a contradiction because the vector $\sum_i\,\rho_i\,(c_{i}\eta +\delta_{i})$ is not isotropic. Analogously, we obtain that the vectors $c_{i}\eta +\delta_{i},\,1 \leq i \leq \ell ,$ are linearly independent. Without loss of generality, we can suppose that the vectors $\eta_i=c_i\eta+\delta_i,\,1\leq i\leq \ell ,$ are orthonormal. In this case, due to $\langle \upsilon_i,\upsilon_j\rangle =0,$ $1\leq i,j\leq\ell ,$ we have $\langle\xi_i,\xi_j\rangle =- \delta_{ij}$ for $\xi_i=\gamma_{i}+b_{i}\xi,\,1\leq i\leq\ell .$ Extend $\eta_1,\ldots ,\eta_\ell $ to an orthonormal basis $\eta_1,\ldots ,\eta_{d+2}$  of $\mbox{span}\{\eta\}\oplus T_g^\perp M,$ and $\xi_1,\ldots ,\xi_{\ell}$ to a basis $\xi_1,\ldots ,\xi_{d+1}$ of $\mbox{span}\{\xi\}\oplus T_f^\perp M$ that satisfies $\langle \xi_i,\xi_j\rangle =- \delta_{ij}.$ Define $\hat{\upsilon}_i =-\xi_i+\eta_i,\,1\leq i\leq \ell ,$
$$ 
W_{1} = \mbox{span} \{\upsilon_{1},\ldots ,\upsilon_{\ell},\, \hat{\upsilon} _{1},\ldots
,\hat{\upsilon} _{\ell} \},\;\;\;\;W_{2} = \mbox{span} \{ \xi_{\ell +1},\ldots ,\xi_{d+1},\,\eta_{\ell +1},\ldots ,\eta_{d+2}\} 
$$ 
and put
$$ 
\beta =\sum_{i=1}^{\ell} \phi_{i} \upsilon_{i} + \sum_{i=1}^{\ell} \psi_{i} \hat{\upsilon}_{i} +
\sum_{i=\ell +1}^{d+1} \phi_{i} \xi_{i} + \sum_{i=\ell +1}^{d+2} \psi_{i} \eta_{i}. 
$$ 
Note that $W=W_{1}\oplus W_{2}$ is an orthogonal decomposition of $W.$ For $1\leq i \leq \ell ,$ we have $\,\psi_{i} = \frac{1}{2}\langle \beta , \upsilon_{i} \rangle = 0$. Set 
$$ 
\omega_{1} = \sum_{i=1}^{\ell} \phi_{i} \upsilon_{i}\;\;\;\mbox{and}\;\;\;\omega_{2} =
\sum_{i=\ell +1}^{d+1} \phi_{i} \xi_{i} + \sum_{i=\ell +1}^{d+2} \psi_{i} \eta_{i}. 
$$ 
Since $\,\ell=\mbox{dim}\,S(\beta )\cap S(\beta )^{\perp} \geq 1,$ then $\,\omega_{1}\,$ is nonzero. It is easy to verify that $\,\omega_{1},\omega_{2}\,$ are symmetric
bilinear forms such that $\,\omega_{1}\,$ is null and $\,\omega_{2}\,$ is flat. In order to see that $\,S(\omega_{2})\,$ is nondegenerate, let $\,\sum_{i}\omega_{2}(X_{i},Y_{i})\in W_{2}\,$ be an arbitrary element in $\,S(\omega_{2})\cap S(\omega_{2})^{\perp}.$ For all $\,v,w \in V,$ we get
$$
\left\langle \sum_{i}\omega_{2}(X_{i},Y_{i}),\beta (v,w)\right\rangle =
\left\langle\sum_{i}\omega_{2}(X_{i},Y_{i}),\omega_{2}(v,w)\right\rangle =0. 
$$ 
Therefore, $\,\sum_{i}\omega_{2}(X_{i},Y_{i})\in S(\beta )\cap S(\beta )^{\perp} .$ Hence, $\sum_{i}\omega_{2}(X_{i},Y_{i})\in W_{1}.$ Thus, 
$$
\sum_{i}\omega_{2}(X_{i},Y_{i})\in W_{1}\cap W_{2}=\{0\}. 
$$ 
Since the subspace $\,S(\omega_{2})\,$ is nondegenerate and $\,d-\ell+1\leq 5,$ the inequality $\,\mbox{dim}\,N(\omega_{2}) \geq n-\mbox{dim}\,W_{2}\,$ is a consequence of the following result whose proof is part of the arguments for the Main Lemma 2.2 in (\cite{C-D}, pp. 968-974).
  
\begin{lemma} Let $\sigma\colon\;V_1\times V_1\to W^{(r,t)}$ be a nonzero flat symmetric bilinear form. Assume $\,r\leq 5\,$ and $\mbox{dim}\,N(\sigma)< \mbox{dim}\,V_{1}-(r+t).$ Then
$\,S(\sigma)\,$ is degenerate.
\end{lemma}

Now to conclude Lemma 2.3, that is $\ell =d+1,$ we proceed exactly as in proof of Assertion 3 in (\cite{Si}, p. 238).\end{proof}

\begin{lemma} There exist an orthonormal basis $\zeta_1,\ldots ,\zeta_d$ of $T_{f(p)}^\perp M$ and an orthonormal basis $\vartheta_1,\ldots, \vartheta_{d+1}$ of $T_{g(p)}^\perp M$ such that
\begin{eqnarray}
\omega_1&=&-\langle\alpha^G,\xi\rangle\,(\xi-\eta \sin\theta\,\cos\varphi+\vartheta_{d+1}\cos\theta\,\cos\varphi+\vartheta_1sin\varphi )\nonumber\\
&-&\langle \alpha^f,\zeta_1\rangle\,(\zeta_1+\eta\sin\theta\,\sin\varphi-\vartheta_{d+1}\cos\theta\,\sin\varphi+\vartheta_1\cos\varphi )\nonumber\\
&-&\sum_{j=2}^d \langle \alpha^f,\zeta_j\rangle\,(\zeta_j+\vartheta_j)\nonumber\\
&&\label{5}\\
\omega_2&=&\,\langle \alpha^G ,\eta\cos\theta +\vartheta_{d+1}\sin\theta\rangle\,(\eta\cos\theta +\vartheta_{d+1}\sin\theta ).\nonumber
\end{eqnarray}
\end{lemma}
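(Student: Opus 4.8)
The plan is to extract both identities directly from the decomposition $\,\beta=\omega_1\oplus\omega_2\,$ constructed in the proof of Lemma 2.3, now with $\,\ell=d+1\,$ (which is exactly the content of Lemma 2.3). In that construction $\,W_2\,$ is the line spanned by a unit spacelike vector $\,\eta_{d+2}\,$ that extends an orthonormal set $\,\eta_1,\dots,\eta_{d+1}\,$ to an orthonormal basis of $\,\mbox{span}\{\eta\}\oplus T_{g(p)}^{\perp}M\,$; in particular $\,\eta_{d+2}\in\mbox{span}\{\eta\}\oplus T_{g(p)}^{\perp}M\,$ and $\,\eta_{d+2}\perp T_{f(p)}^{\perp}M\,$. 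First I would write $\,\eta_{d+2}=\eta\cos\theta+\vartheta_{d+1}\sin\theta\,$, where $\,\theta\,$ is the angle $\,\eta_{d+2}\,$ makes with $\,\eta\,$ and $\,\vartheta_{d+1}\,$ is the unit vector along its $\,T_{g}^{\perp}M$-component (when this component vanishes, any unit vector of $\,T_{g}^{\perp}M\,$). Since $\,W=W_1\oplus W_2\,$ is an orthogonal decomposition, $\,\omega_2\,$ is the $\,W_2$-component of $\,\beta=\alpha^f+\alpha^G\,$, and because $\,\eta_{d+2}\perp T_{f}^{\perp}M\,$ this equals $\,\langle\beta,\eta_{d+2}\rangle\,\eta_{d+2}=\langle\alpha^G,\eta_{d+2}\rangle\,\eta_{d+2}\,$, which is precisely the second line of (\ref{5}).

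For $\,\omega_1=\beta-\omega_2\,$ I would pass to coordinates. Choosing for the moment arbitrary orthonormal bases $\,\zeta_1,\dots,\zeta_d\,$ of $\,T_{f}^{\perp}M\,$ and $\,\vartheta_1,\dots,\vartheta_d\,$ of the orthogonal complement of $\,\vartheta_{d+1}\,$ in $\,T_{g}^{\perp}M\,$ and expanding, a routine computation using $\,\langle\alpha^G,\eta_{d+2}\rangle=\langle\alpha^G,\eta\rangle\cos\theta+\langle\alpha^G,\vartheta_{d+1}\rangle\sin\theta\,$ gives
$$\omega_1=\alpha^f-\langle\alpha^G,\xi\rangle\,\xi+g\sin\theta\,\eta-g\cos\theta\,\vartheta_{d+1}+\sum_{k=1}^{d}\langle\alpha^G,\vartheta_k\rangle\,\vartheta_k,\qquad g:=\langle\alpha^G,\eta\sin\theta-\vartheta_{d+1}\cos\theta\rangle.$$
Writing $\,f_j=\langle\alpha^f,\zeta_j\rangle\,$, $\,c=\langle\alpha^G,\xi\rangle\,$, $\,a_k=\langle\alpha^G,\vartheta_k\rangle\,$, and using that $\,\omega_1\,$ is null (Lemma 2.3) together with $\,\sin^2\theta+\cos^2\theta=1\,$ and the signature of $\,W_p\,$, the nullity condition is equivalent to the single identity
$$\sum_{j=1}^{d}f_j(X,Y)\,f_j(Z,U)+c(X,Y)\,c(Z,U)=g(X,Y)\,g(Z,U)+\sum_{k=1}^{d}a_k(X,Y)\,a_k(Z,U)$$
for all $\,X,Y,Z,U\in T_pM\,$. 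Thus the $\,\Bbb R^{d+1}$-valued symmetric bilinear forms $\,P=(f_1,\dots,f_d,c)\,$ and $\,Q=(g,a_1,\dots,a_d)\,$ induce the same inner products; since the metric of $\,\Bbb R^{d+1}\,$ is definite, the rule $\,P(X,Y)\mapsto Q(X,Y)\,$ is a well-defined linear isometry of $\,S(P)\,$ onto $\,S(Q)\,$, and it extends to a $\,T\in O(d+1)\,$ with $\,Q=TP\,$.

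It then remains to normalize $\,T\,$. The freedom still available is exactly an $\,O(d)\,$ acting on $\,(f_1,\dots,f_d)\,$ (the choice of the basis $\,\zeta_1,\dots,\zeta_d\,$), an $\,O(d)\,$ acting on $\,(a_1,\dots,a_d)\,$ (the choice of $\,\vartheta_1,\dots,\vartheta_d\,$), and the auxiliary angle $\,\varphi\,$. I would check the elementary linear-algebra fact that, after composing $\,T\,$ on the left and on the right with matrices of the form $\,\mbox{diag}(1,O(d))\,$, one may assume $\,T\,$ sends $\,(c,f_1,f_2,\dots,f_d)\,$ to $\,(c\cos\varphi-f_1\sin\varphi,\ -(c\sin\varphi+f_1\cos\varphi),\ -f_2,\dots,-f_d)\,$ for a suitable $\,\varphi\,$: a left factor rotates the image of the first basis vector into the $\,(c,f_1)$-plane, which fixes $\,\varphi\,$, and a right factor then prescribes the remaining columns as an orthonormal basis of the orthogonal complement of that image. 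Substituting these values of $\,g,a_1,\dots,a_d\,$ back into the coordinate expression for $\,\omega_1\,$ and grouping the coefficients of $\,c\,$, of $\,f_1\,$ and of each $\,f_j\ (2\le j\le d)\,$ yields the first line of (\ref{5}); the pairwise orthogonality and nullity of the four bracketed vectors there follow at once from $\,|\xi|^2=-1\,$ and $\,|\eta|^2=|\vartheta_k|^2=1\,$. I expect the main work to be this normalization of $\,T\,$, together with the sign bookkeeping forced by the negative-definite metric of $\,T_{f}^{\perp}M\subset W_p\,$ and a separate inspection of the degenerate case $\,\sin\theta=0\,$, where $\,\vartheta_{d+1}\,$ is not pinned down by $\,\omega_2\,$ but the additional freedom only makes (\ref{5}) easier to achieve.
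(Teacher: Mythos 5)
Your argument is correct and rests on the same geometric fact as the paper's, but it executes it differently. The common core is that $\,\omega_2\,$ is the $\,W_2$-component of $\,\beta\,$ (your computation of it is exactly the paper's), and that $\,S(\omega_1)\,$ sits inside a $\,(d+1)$-dimensional isotropic subspace of $\,W_p^{(d+1,d+2)}$, which is necessarily the graph of a linear isometry from the negative-definite block onto a hyperplane of the positive-definite block; the bases $\,\zeta_j,\vartheta_k\,$ and the angle $\,\varphi\,$ are just a normal form for that isometry. The paper obtains the isometry for free from the basis $\,\upsilon_i=\xi_i+\eta_i\,$ of $\,S(\beta)\cap S(\beta)^{\perp}\,$ already built in the proof of Lemma 2.3, and constructs the adapted bases concretely (the image of $\,\xi\,$ fixes $\,\varphi\,$ and $\,\vartheta_1$, then $\,\zeta_1\,$ is the preimage of the leftover direction, then $\,\zeta_j\mapsto\vartheta_j$); you instead rederive the isometry $\,T\in O(d+1)\,$ from the nullity of $\,\omega_1\,$ (the well-definedness argument via definiteness is right) and normalize it by an $\,O(d)\times O(d)\,$ double-coset argument, which is more self-contained but reproves part of Lemma 2.3's mechanism. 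Two small cautions, neither a gap: first, the paper's coefficients $\,\langle\alpha^G,\xi\rangle\,$ and $\,\langle\alpha^f,\zeta_j\rangle\,$ in (\ref{5})--(\ref{7}) are taken with respect to the $\,W_p$-metric, which is negative definite on $\,T_{f(p)}^{\perp}M\oplus\mbox{span}\{\xi\}$; that is where the overall minus signs in (\ref{5}) come from, so your Euclidean-convention $\,f_j,c\,$ must be converted consistently or your normal form for $\,T\,$ will appear to disagree with (\ref{6}) by signs (all such discrepancies are absorbable into the $\,O(d)\,$ factors and the choice of $\,\varphi$, as you anticipate). Second, your normalization should note explicitly that $\,c\,$ and $\,g\,$ cannot be rotated (neither $\,\xi\,$ nor $\,\eta_{d+2}\,$ is at your disposal), which is precisely why only $\,\mbox{diag}(1,O(d))\,$ factors are allowed on each side and why a residual angle $\,\varphi\,$ survives.
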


\begin{proof} Here our notations are as in proof of Assertion. We have $W_2=\mbox{span}\{\eta_{d+2}\}$ due to $\ell=d+1$. Put $\eta_{d+2}=\eta\cos\theta +\vartheta_{d+1}\sin\theta .$ The vector $-\eta\sin\theta +\vartheta_{d+1}\cos\theta$ belongs to $\mbox{span}\{\eta_{i}\;:\;1\leq i\leq d+1\}$ and its orthogonal complement in $\mbox{span}\{\eta_i\;:\;1\leq i\leq d+1\}$ is the orthogonal complement of $\vartheta_{d+1}$ in $T_{g(p)}^\perp M.$ The vectors $\xi_i=\gamma_i+b_i\xi,\,1\leq i\leq d+1$, being linearly independent, span the space $\mbox{span}\{\xi\}\oplus T_{f(p)}^\perp M.$ Write $\xi =\sum_{i=1}^{d+1}\rho_i\xi_i$ and take the vector in $S(\beta )\cap S(\beta )^{\perp}$ given by 
$$
\sum_{i=1}^{d+1}\rho_i\upsilon_i =\xi+\cos\varphi\,(-\eta\sin\theta +\vartheta_{d+1}\cos\theta )+\vartheta_1\sin\varphi ,
$$
where $\vartheta_1\in T_g^\perp M$ is an unitary vector orthogonal to $\vartheta_{d+1}.$ The vector $-\sin\varphi\,(-\eta\sin\theta +\vartheta_{d+1}\cos\theta )+\vartheta_1\cos\varphi$ also belongs to $\mbox{span}\{\eta_{i}\;:\;1\leq i\leq d+1\}$ since it is orthogonal to $\eta_{d+2}.$ Take $\zeta_1$ as being the unitary vector in $T_f^\perp M$ such that 
$$
\zeta_1-\sin\varphi\,(-\eta\sin\theta +\vartheta_{d+1}\cos\theta )+\vartheta_1\cos\varphi\in S(\beta )\cap S(\beta )^{\perp}.
$$
Now extend $\zeta_1$ to an orthonormal basis $\zeta_1,\ldots ,\zeta_d$ of $T_f^\perp M$ and take the vectors $\vartheta_2,\ldots ,\vartheta_d$ in $T_g^\perp M$ such that $\zeta_i+\vartheta_i,\,2\leq i \leq d,$ belong to $S(\beta )\cap S(\beta )^{\perp}.$ Now it is not difficult to see that $\vartheta_1,\ldots ,\vartheta_{d+1}$ is an orthonormal basis of $T_g^\perp M$ and satisfies (\ref{5}).\end{proof}

Note also that in (\ref{5}) the form $\omega_1$ is a linear combination of vectors orthogonal to $\beta =\alpha^f\oplus \alpha^G.$ These orthogonality give us
\begin{eqnarray}
\langle\alpha^G,\xi\rangle &=& \langle\alpha^G ,\eta\rangle\sin\theta\,\cos\varphi -\langle \alpha^G ,\vartheta_{d+1}\rangle\cos\theta\,\cos\varphi -\langle\alpha^G,\vartheta_1\rangle\sin\varphi\nonumber\\
\langle\alpha^f,\zeta_1\rangle &=& -\langle\alpha^G,\eta\rangle\sin\theta\,\sin\varphi +\langle \alpha^G ,\vartheta_{d+1}\rangle\cos\theta\,\sin\varphi -\langle\alpha^G,\vartheta_1\rangle\cos\varphi\label{6}\\
-\langle\alpha^f,\zeta_j\rangle &=& \langle\alpha^G,\vartheta_j\rangle,\;2\leq j\leq d.\nonumber
\end{eqnarray}
The above first two equations imply that
\begin{equation}
\langle\alpha^G,\vartheta_1\rangle =-\langle\alpha^G,\xi\rangle\sin\varphi -\langle\alpha^f,\zeta_1\rangle\cos\varphi .\label{7}
\end{equation}

\begin{proof}[of Lemma 2.1] First observe that in (\ref{5}) we have $\sin\theta\,cos\varphi\neq -1.$ Otherwise, $G=\xi +\eta$ belongs to $S(\beta )\cap S(\beta )^{\perp}$ and, consequently,
$$
0=\langle G ,\beta\rangle =\langle G,\alpha^G\rangle=-\langle\,,\,\rangle
$$
which is a contradiction. Then, we can consider the orthonormal basis of $T_g^\perp M$ given by

\begin{eqnarray}
\overline{\mu}_{1}&=&\frac{1}{1+\sin\theta\,\cos\varphi}\,\left[\vartheta_{1}\,(\cos\varphi+\sin\theta )-\vartheta_{d+1}\,\sin\varphi\,\cos\theta\right],\;\;\;\mu_j=\vartheta_j,\,2\leq j\leq d,\nonumber\\
\overline{\mu}_{d+1}&=&\frac{1}{1+\sin\theta\,\cos\varphi}\,\left[\vartheta_1\,\sin\varphi\,\cos\theta+\vartheta_{d+1}\,(\cos\varphi+\sin\theta )\right].\label{8}
\end{eqnarray}
By (\ref{6}), for $2\leq j\leq d,$ it holds that
$$
\langle \alpha^G,\mu_j\rangle =\langle \alpha^G,\vartheta_j\rangle =-\langle \alpha^f,\zeta_j\rangle .
$$
Thus, we can write
\begin{eqnarray}
\alpha^G =-\langle \alpha^G,\xi\rangle\,(\xi+\eta)-\langle\,,\,\rangle\,\eta +\langle\alpha^G,\overline{\mu}_1\rangle\,\overline{\mu}_1-\sum_{j=2}^d\langle\alpha^f,\zeta_j\rangle\,\mu_j+\langle\alpha^G,\overline{\mu}_{d+1}\rangle\,\overline{\mu}_{d+1}.\label{9}
\end{eqnarray}
The equalities in (\ref{5}) give that
\begin{eqnarray*}
\langle\alpha^G,\overline{\mu}_1\rangle &=&\langle\beta,\overline{\mu}_1\rangle=\langle\omega_1+\omega_2,\overline{\mu}_1\rangle =\\
&-&\langle\alpha^G,\xi\rangle\,\sin\theta\,\sin\varphi-\frac{\langle \alpha^f,\zeta_1\rangle}{1+\sin\theta\,\cos\varphi}\left[1-\sin^2\theta\,\sin^2\varphi +\sin\theta\,\cos\varphi\right]\\
&-&\left[\langle \alpha^G ,\eta\cos\theta +\vartheta_{d+1}\sin\theta\rangle\cos\theta\right]\frac{\sin\theta\,\sin\varphi}{1+\sin\theta\,\cos\varphi}
\end{eqnarray*}
and, jointly with $-\langle\,,\,\rangle =\langle G,\alpha^G\rangle =\langle \xi +\eta ,\beta\rangle =\langle \xi +\eta ,\omega_1+\omega_2\rangle $, that
\begin{equation}
\langle \alpha^G ,\eta\cos\theta +\vartheta_{d+1}\sin\theta\rangle\,\cos\theta =-\langle\,,\,\rangle-\langle \alpha^G,\xi\rangle\,(1+\sin\theta\,\cos\varphi )+\langle \alpha^f,\zeta_1\rangle\,\sin\theta\,\sin\varphi.\label{10}
\end{equation}
Therefore,
\begin{equation}
\langle\alpha^G,\overline{\mu}_1\rangle =\frac{\sin\theta\,\sin\varphi }{1+\sin\theta\,\cos\varphi }\langle\,,\,\rangle -\langle \alpha^f,\zeta_1\rangle .\label{11}
\end{equation}
From (\ref{6}) and $A^G_{\xi +\eta}=-I,$ we deduce that
$$
\cos\varphi\langle\alpha^G,\xi\rangle -\sin\varphi\langle \alpha^f ,\zeta_{1}\rangle =-\sin\theta\langle\,,\,\rangle -\cos\theta\langle\alpha^G,\vartheta_{d+1}\rangle -\sin\theta\langle\alpha^G,\xi\rangle.
$$
So
$$
(\cos\varphi+\sin\theta)\langle\alpha^G,\xi\rangle =\sin\varphi\langle \alpha^f ,\zeta_{1}\rangle -\sin\theta\langle\,,\,\rangle -\cos\theta\langle\alpha^G,\vartheta_{d+1}\rangle.
$$
Multiplying the above equation by $\cos\varphi +\sin\theta$ and introducing $\overline{\mu}_{d+1}$ according to (\ref{8}), it is a straightforward calculation to see that 
\begin{eqnarray*}
(\cos\varphi +\sin\theta )^2\langle\alpha^G,\xi\rangle &=&\sin\varphi\,(\cos\varphi +\sin\theta )\langle\alpha^f,\zeta_1\rangle  
-\sin\theta\,(\cos\varphi +\sin\theta )\langle\,,\,\rangle \\
&-&\langle \alpha^G ,\overline{\mu}_{d+1}\rangle (1+\cos\varphi\,\sin\theta )\cos\theta +\langle \alpha^G ,\vartheta_{1}\rangle\cos^2\theta\,\sin\varphi .
\end{eqnarray*}
This and (\ref{7}) imply
\begin{eqnarray}
-\langle\alpha^G,\xi\rangle &=&-\frac{\sin\varphi\,\sin\theta}{1+\cos\varphi\,\sin\theta}\langle\alpha^f,\zeta_1\rangle +\frac{\sin\theta\,(\cos\varphi +\sin\theta )}{(1+\cos\varphi\,\sin\theta )^2}\langle\,,\,\rangle\nonumber\\
&+&\frac{\cos\theta}{1+\cos\varphi\,\sin\theta}\langle \alpha^G ,\overline{\mu}_{d+1}\rangle .\label{12}
\end{eqnarray}
The formulae (\ref{9}), (\ref{11}) e (\ref{12}) give us
\begin{eqnarray*}
\alpha^G &=&\langle\,,\,\rangle\left[\frac{\sin\theta\,(\cos\varphi +\sin\theta )}{(1+\cos\varphi\,\sin\theta )^2}(\xi +\eta)-\eta + \frac{\sin\varphi\,\sin\theta}{1+\cos\varphi\,\sin\theta}\overline{\mu}_1\right]\\
&+&\langle\alpha^f,\zeta_1\rangle\left[-\frac{\sin\varphi\,\sin\theta}{1+\cos\varphi\,\sin\theta}(\xi +\eta )-\overline{\mu}_1\right]-\sum_{j=2}^d \langle\alpha^f,\zeta_j\rangle\mu_j\\
&+&\langle \alpha^G ,\overline{\mu}_{d+1}\rangle\left[\overline{\mu}_{d+1}+\frac{\cos\theta}{1+\cos\varphi\,\sin\theta}(\xi +\eta )\right].
\end{eqnarray*}
From which it is not difficult to see that
\begin{eqnarray*}
\alpha^G &=&\langle\,,\,\rangle \frac{1}{1+\cos\varphi\,\sin\theta }[\xi -\eta\,\cos\varphi\,\sin\theta +\overline{\mu}_1\,\sin\varphi\,\sin\theta +\overline{\mu}_{d+1}\cos\theta ]\\
&-&\langle\alpha^f,\zeta_1\rangle\left[\frac{\sin\varphi\,\sin\theta}{1+\cos\varphi\,\sin\theta}(\xi +\eta )+\overline{\mu}_1\right]-\sum_{j=2}^d \langle\alpha^f,\zeta_j\rangle\mu_j\\
&+&\left\langle \alpha^G ,\overline{\mu}_{d+1}+\frac{\cos\theta}{1+\cos\varphi\,\sin\theta}(\xi +\eta )\right\rangle\left[\overline{\mu}_{d+1}+\frac{\cos\theta}{1+\cos\varphi\,\sin\theta}(\xi +\eta )\right].
\end{eqnarray*}
Now (\ref{3}) follows if we define
\begin{eqnarray*}
\mu_o &=&-\frac{1}{1+\cos\varphi\,\sin\theta }[\xi -\eta\,\cos\varphi\,\sin\theta +\overline{\mu}_1\,\sin\varphi\,\sin\theta +\overline{\mu}_{d+1}\cos\theta ],\\
\mu_1 &=&\frac{\sin\varphi\,\sin\theta}{1+\cos\varphi\,\sin\theta}(\xi +\eta )+\overline{\mu}_1,\;\;
\mu_{d+1}=\overline{\mu}_{d+1}+\frac{\cos\theta}{1+\cos\varphi\,\sin\theta}(\xi +\eta ).
\end{eqnarray*}
Notice that if $v\in V$ and $w\in N(\omega_2 ),$ then 
$$
\langle A^G_{\mu_{d+1}}v,w\rangle=\langle \alpha^G (v,w),\mu_{d+1}\rangle = \langle \beta (v,w),\mu_{d+1}\rangle =\langle \omega_1 (v,w),\mu_{d+1}\rangle =0 
$$
due to (\ref{5}), (\ref{8}) and (\ref{10}). Thus, $\mbox{rank}\,A_{\mu_{d+1}}^G\leq 1$. Recall that $\mbox{dim}\,N(\omega_2 )\geq n-1$ by Assertion in proof of Lemma 2.3 and $\ell=d+1$.\end{proof}

We point out that $p$ in Lemma 2.1 being arbitrary the decomposition (\ref{3}) holds on $M^n.$

\begin{lemma} At $p\in M^n$, it holds that $\mbox{dim}\,S\left(\alpha^G\right)\geq d+1$. Furthermore, $\mbox{dim}\,S\left(\alpha^G\right)= d+1$ at $p$ if and only if $A^G_{\mu_{d+1}}\equiv 0$ at $p$ and $\mbox{dim}\,S\left(\alpha^G\right)= d+2$ at $p$ if and only if $\mbox{rank}\,A^G_{\mu_{d+1}}=1$ at $p$.\end{lemma}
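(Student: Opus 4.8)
The plan is to read everything off from the normal decomposition (\ref{3}). Fix $p\in M^{n}$. Write $V_{1}=\mbox{span}\{\mu_{o},\mu_{1},\ldots ,\mu_{d}\}$ and $V_{2}=\mbox{span}\{\mu_{d+1}\}$, so that $V_{1}\cap V_{2}=\{0\}$, $\mbox{dim}\,V_{1}=d+1$, and $\alpha^{G}$ takes values in $V:=V_{1}\oplus V_{2}$. By (\ref{3}), $\alpha^{G}=\gamma +\delta$ with
\[
\gamma (X,Y)=-\langle X,Y\rangle \mu_{o}+\sum_{j=1}^{d}\langle \alpha^{f}(X,Y),\zeta_{j}\rangle \mu_{j}\in V_{1},\qquad \delta (X,Y)=\langle A^{G}_{\mu_{d+1}}X,Y\rangle \mu_{d+1}\in V_{2}.
\]
Let $\pi_{1}\colon V\to V_{1}$ be the projection with kernel $V_{2}$. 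Since $\pi_{1}(\alpha^{G}(X,Y))=\gamma (X,Y)$ we get $\pi_{1}(S(\alpha^{G}))=S(\gamma )$; and as $S(\alpha^{G})\subseteq V$ with $\mbox{dim}\,V=d+2$, it follows that $\mbox{dim}\,S(\alpha^{G})\leq d+2$ while $\mbox{dim}\,S(\alpha^{G})\geq \mbox{dim}\,S(\gamma )$.

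First I would prove the key fact $S(\gamma )=V_{1}$. If not, some nonzero linear functional on $V_{1}$ annihilates $S(\gamma )$; such a functional is described by a scalar $a$ (its value on $\mu_{o}$) and a vector $\eta=\sum_{j}b_{j}\zeta_{j}\in T_{f(p)}^{\perp}M$ (with $b_{j}$ its value on $\mu_{j}$), its value on $\gamma (X,Y)$ being $\langle \alpha^{f}(X,Y),\eta\rangle -a\langle X,Y\rangle$. Annihilating $S(\gamma )$ thus means $A^{f}_{\eta}=a\,\mbox{Id}$ on $T_{p}M$; nonvanishing of the functional forces $\eta\neq 0$ (else $a=0$), and then, with $U^{1}=\mbox{span}\{\eta\}$ and $\bar\eta=(a/|\eta|^{2})\eta\in U^{1}$, a one-line check gives $\alpha^{f}_{U^{1}}-\langle\,,\,\rangle\bar\eta\equiv 0$, so $\nu_{1}^{c}(p)=n$, contradicting the hypothesis $\nu_{1}^{c}\leq n-4$. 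Hence $S(\gamma )=V_{1}$, and in particular $\mbox{dim}\,S(\alpha^{G})\geq \mbox{dim}\,S(\gamma )=d+1$, which is the first assertion.

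For the dichotomy, recall $\mbox{rank}\,A^{G}_{\mu_{d+1}}\leq 1$. If $A^{G}_{\mu_{d+1}}\equiv 0$ then $\delta =0$, so $\alpha^{G}=\gamma$ and $\mbox{dim}\,S(\alpha^{G})=\mbox{dim}\,V_{1}=d+1$. If $\mbox{rank}\,A^{G}_{\mu_{d+1}}=1$, I claim $\mbox{dim}\,S(\alpha^{G})=d+2$. If not, then $\mbox{dim}\,S(\alpha^{G})=d+1$, and since $\pi_{1}(S(\alpha^{G}))=S(\gamma )=V_{1}$ with matching dimension, $\pi_{1}|_{S(\alpha^{G})}\colon S(\alpha^{G})\to V_{1}$ is an isomorphism; hence the $V_{2}$-component of every element of $S(\alpha^{G})$ is a linear function of its $V_{1}$-component, which applied to $\alpha^{G}(X,Y)$ yields a linear functional $t$ on $V_{1}$ with $\langle A^{G}_{\mu_{d+1}}X,Y\rangle =t(\gamma (X,Y))$ for all $X,Y$. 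Writing $t$ via a pair $(a,\eta )$ as above turns this into $A^{G}_{\mu_{d+1}}=A^{f}_{\eta}-a\,\mbox{Id}$. If $\eta =0$ this reads $A^{G}_{\mu_{d+1}}=-a\,\mbox{Id}$, of rank $0$ or $n\geq 2$, never $1$; so $\eta\neq 0$ and $\mbox{ker}(A^{f}_{\eta}-a\,\mbox{Id})$ has dimension $n-1$. As before, this kernel lies inside $N(\alpha^{f}_{U^{1}}-\langle\,,\,\rangle\bar\eta )$ with $U^{1}=\mbox{span}\{\eta\}$, $\bar\eta=(a/|\eta|^{2})\eta$, so $\nu_{1}^{c}(p)\geq n-1$, again contradicting $\nu_{1}^{c}\leq n-4$. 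Hence $\mbox{dim}\,S(\alpha^{G})=d+2$ in this case.

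Finally, since $\mbox{rank}\,A^{G}_{\mu_{d+1}}\leq 1$, the two cases above are exhaustive and produce the distinct values $d+1$ and $d+2$; this gives $\mbox{dim}\,S(\alpha^{G})=d+1\Longleftrightarrow A^{G}_{\mu_{d+1}}\equiv 0$ and $\mbox{dim}\,S(\alpha^{G})=d+2\Longleftrightarrow \mbox{rank}\,A^{G}_{\mu_{d+1}}=1$, as claimed. The only genuine computations are the two conformal-nullity estimates; the rest is projection/graph bookkeeping. The main obstacle to watch will be making sure that both ``defect'' scenarios genuinely furnish a subspace of $T_{p}M$ of dimension at least $n-1$ on which $f$ is umbilical along a single normal direction; once that is in place, the bound $\nu_{1}^{c}\leq n-4$ (in fact $\nu_{1}^{c}<n-1$ would suffice) closes the argument.
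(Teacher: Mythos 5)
Your proof is correct. It differs from the paper's mainly in packaging: the paper computes $S\left(\alpha^G\right)^\perp$ with respect to the Lorentzian metric inside the full $(d+3)$-dimensional normal space $\mbox{span}\,\{G,\mu_o,\ldots ,\mu_{d+1}\}$, using the dual-basis expansion (\ref{13}) and then testing the resulting identity (\ref{14}) only against $w\in \mbox{Ker}\,A^G_{\mu_{d+1}}$, so that the rank-$\leq 1$ term drops out and both rank cases are handled in a single stroke; you instead stay inside the $(d+2)$-dimensional span of $\mu_o,\ldots ,\mu_{d+1}$, use the metric-free projection onto $V_1$, and split the argument into a span step ($S(\gamma )=V_1$) and a graph step (rank one forces the $\mu_{d+1}$-component of $S\left(\alpha^G\right)$ not to be a linear function of the $V_1$-component). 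The contradiction engine is identical in both versions: an annihilating functional on the relevant span translates, via (\ref{3}), into $A^f_\eta -a\,\mbox{Id}$ vanishing on a subspace of dimension at least $n-1$, hence $\nu_1^c\geq n-1$, against the hypothesis $\nu_1^c\leq n-4$ (as you note, $\nu_1^c\leq n-2$ already suffices here, and the same is true of the paper's argument). A small benefit of your ordering is that it establishes $S(\omega )=\mbox{span}\,\{\mu_o,\mu_1,\ldots ,\mu_d\}$ for the form $\omega$ of (\ref{15}) up front, whereas the paper deduces this afterwards as a consequence of the lemma; a small benefit of the paper's version is that it explicitly locates $S\left(\alpha^G\right)^\perp$ inside $\mbox{span}\,\{\mu_o,\mu_{d+1}\}$ with $\mu_o$ always belonging to it, a fact reused later (for instance $\left\langle \alpha^G,\mu_o\right\rangle \equiv 0$ in the proof of Lemma 2.9).
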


\begin{proof} Consider $S\left(\alpha^G\right)^\perp ,$ the orthogonal complement of $S\left(\alpha^G\right)$ in the nondegenerate $(d+3)$-dimensional vector space 
$$T_{G(p)}^\perp M=\mbox{span}\,\{G,\mu_o,\mu_1,\ldots ,\mu_{d+1}\}.$$ 
Note that the coordinates of an arbitrary vector $\mu\in T_{G(p)}^\perp M$ on the basis $G,\mu_o,\mu_1,\ldots ,\mu_{d+1}$ are given by
\begin{equation}
\mu=\langle \mu,\mu_o\rangle G+\langle \mu,G\rangle\mu_o +\sum_{i=1}^d\langle \mu,\mu_i\rangle \mu_i+\langle\mu,\mu_{d+1}\rangle \mu_{d+1}.\label{13}
\end{equation}
Now take $\mu\in S\left(\alpha^G\right)^\perp .$  Then, for all $v,\,w\in V,$ we have
\begin{equation}
0=\left\langle \alpha^G (w,v),\mu \right\rangle =\left\langle \left(\,A^f_{\sum_i \langle \mu_i ,\mu\rangle\,\zeta_i}-\langle \mu_o,\mu\rangle\,I+\langle \mu_{d+1},\mu\rangle\,A^G_{\mu_{d+1}}\,\right)(w),v\right\rangle \label{14}
\end{equation}
due to (\ref{3}). For $w\in \mbox{Ker}\,\left(A^G_{\mu_{d+1}}\right),$ we obtain
$$0=\left\langle \left(\,A^f_{\sum_i \langle \mu_i ,\mu\rangle\,\zeta_i}-\langle \mu_o,\mu\rangle\,I\,\right)(w),v\right\rangle ,\;\forall v\in T_pM.
$$
So
$$
\nu_1^c\geq \mbox{dim}\,\mbox{Ker}\left(A^f_{\sum_i \langle \mu_i ,\mu\rangle\,\zeta_i}-\langle \mu_o,\mu\rangle\,I\right)\geq  \mbox{dim}\,\mbox{Ker}\left(A^G_{\mu_{d+1}}\right)\geq n-1
$$
which contradicts the hypothesis on the $1$-conformal nullity of $f$ case $\sum_i \langle \mu_i ,\mu\rangle\,\zeta_i\neq 0.$ Then $\langle \mu_i ,\mu\rangle =0,\,1\leq i\leq d,$ and consequently $\langle \mu_o ,\mu\rangle = 0.$  Thus, $\mu$ belongs to $\mbox{span}\{\mu_o ,\mu_{d+1}\}$ due to (\ref{13}). From (\ref{14}) we deduce that $\mu_{d+1}\in  S\left(\alpha^G\right)^\perp$ if and only if $A^G_{\mu_{d+1}}\equiv 0.$ Since $\mu_o\in S\left(\alpha^G\right)^\perp$ and $\mbox{rank}\,A^G_{\mu_{d+1}}\leq 1$ we have proved Lemma 2.6.\end{proof}
As a consequence of Lemma 2.6, if we define 
\begin{equation}
\omega =-\langle\,\,,\,\,\rangle \mu_{o} +\sum_{j=1}^{d}\left\langle \alpha^{f} , \zeta_{j}\right\rangle\mu_{j}, \label{15}
\end{equation}
we have $\mbox{dim}\,S(\omega )=d+1,$ that is, 
$$S(\omega )=\mbox{span}\,\{\,\mu_o,\,\mu_1,\,\mu_2,\ldots ,\mu_d\,\}.$$
In relation to $S(\omega )$ we also claim that
\begin{equation}
S(\omega ) =\,\mbox{span}\{\,\omega (X,Y)\,:\,X,Y \in  \mbox{Ker}\,A^G_{\mu_{d+1}}\,\}.\label{16} 
\end{equation}
In fact, if we put $R=\,\mbox{span}\{\,\omega (X,Y)\,:\,X,Y \in  \mbox{Ker}\,A^G_{\mu_{d+1}}\,\},$ it suffices to verify that $\mbox{dim}\,R=d+1.$ For to see this we show that the orthogonal complement of $R$ in the nondegenerate $(d+2)$-dimensional vector space $\mbox{span}\,\{G,\,\mu_o,\,\mu_1,\,\mu_2,\ldots ,\mu_d\,\}$ has dimension one. Let $\mu=aG+b\mu_{o} +\sum_{j=1}^{d}a_j\mu_{j}$ be an arbitrary vector orthogonal to $R.$ Then, for all $X,Y \in  \mbox{Ker}\,A^G_{\mu_{d+1}},$ we can write
$$0=\langle \omega (X,Y),\mu\rangle=-a\langle X,Y\rangle +\sum_{j=1}^{d}\left\langle \alpha^{f}(X,Y) , \zeta_{j}\right\rangle a_{j}=
\left\langle \left(A^f_{\sum_{j=1}^{d}a_{j}\zeta_{j}}-aI\right)X,Y\right\rangle.$$
Consequently, if $\gamma=\sum_{j=1}^{d}a_{j}\zeta_{j}\neq 0$ then the nullity of $\alpha^f_{\mbox{span}\,\{\gamma\}}-\langle\,,\,\rangle a\frac{\gamma}{|\gamma|^2}$ is at least $n-2.$ This implies that $\nu^c_1\geq n-2$ and we have obtained a contradiction with our hypothesis. Thus, $a=0=a_j,\,1\leq j\leq d,$ and the claim have been proved.

\begin{proof}[of Theorem 1.2] Let $\mathcal{U}$ be the subset of $M^n$ constituted of the points $q$ so that $\mbox{dim}\,S\left(\alpha^G\right)$ is constant in a neighborhood of $q$. The subset $\mathcal{U}$ is open and dense in $M^n$. In fact, clearly $\mathcal{U}$ is open in $M^n$. For to see that $\mathcal{U}$ is dense in $M^n$, consider an arbitrary point $p$ in $M^n$. By Lemma 2.6, in each point $p$ of $M^n$, the dimension of $\,S\left(\alpha^G\right)$ is either $d+1$ or $d+2$. If $\mbox{dim}\,S\left(\alpha^G\right)=d+2$ at $p$, then $p$ belongs to $\mathcal{U}$ since $\mbox{dim}\,S\left(\alpha^G\right)$ does not decrease in a neighborhood of $p$ by continuity. If $\mbox{dim}\,S\left(\alpha^G\right)=d+1$ at $p$ and $p$ does not belong to $\mathcal{U}$, there is a sequence of points where $\mbox{dim}\,S\left(\alpha^G\right)$  is $d+2$, consequently a sequence of points in $\mathcal{U}$, that converges to $p$. Thus, $\mathcal{U}$ is dense in $M^n$. Now, for an arbitrary point $p\in \mathcal{U}$, we consider a connected neighborhood $U\subset \mathcal{U}$ of $p$ where $F$ is an embedding and $\mbox{dim}\,S\left(\alpha^G\right)$ is constant. We divide the proof of Theorem 1.2 in two cases.

\vspace{0.2cm}

\noindent{\bf Case I.} $\mbox{dim}\,S\left(\alpha^G\right)=d+2$ on $U$. In this case, $\mbox{rank}\,A^G_{\mu_{d+1}}\equiv 1$ on $U$ by Lemma 2.6 and
$$S\left(\alpha^G\right) =\mbox{span}\,\{\mu_o,\mu_1,\ldots ,\mu_{d+1}\} $$
due to (\ref{3})

\begin{lemma} In Case I we can choose $\mu_{d+1}$ such that the unitary vector field $\mu_{d+1}\in T_G^\perp M$ be smooth on $U$. \end{lemma}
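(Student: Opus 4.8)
The plan is to produce a smooth unit normal field spanning the line $\mathrm{span}\{\mu_{d+1}\}$ inside $T_G^\perp M$, exploiting that in Case I this line is intrinsically characterized by the constant-rank bilinear form $\alpha^G$. First I would show that, on $U$, the subspace $S(\alpha^G)$ is a smooth subbundle of $T_G^\perp M$ of rank $d+2$: indeed $\alpha^G$ is smooth, its image has locally constant dimension by hypothesis, so $p\mapsto S(\alpha^G)(p)$ is a smooth distribution in the normal bundle (a standard fact: the image of a vector-bundle morphism of locally constant rank is a subbundle). Likewise the isotropic line $\mathrm{span}\{\mu_o\}=S(\alpha^G)^\perp\cap S(\alpha^G)$, or more directly the line $\mathrm{span}\{G\}$ which is globally smooth since $G$ is, sit smoothly inside. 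By Lemma 2.6, $\mathrm{rank}\,A^G_{\mu_{d+1}}\equiv 1$, so $\mathrm{span}\{\mu_o,\mu_1,\ldots,\mu_d\}=S(\omega)$ (with $\omega$ as in~(\ref{15})) is also a smooth subbundle of rank $d+1$: this follows from~(\ref{16}), which realizes $S(\omega)$ as the span of $\omega$ restricted to the smooth subbundle $\mathrm{Ker}\,A^G_{\mu_{d+1}}=\mathrm{Ker}\,\alpha^G_{\mathrm{span}\{\mu_{d+1}\}}$ — but it is cleaner to note $S(\omega)=S(\alpha^G)\cap(\text{a complement})$, or simply that $\mathrm{span}\{\mu_{d+1}\}$ is the orthogonal complement of $\mathrm{span}\{G,\mu_o,\mu_1,\ldots,\mu_d\}$ in $T_G^\perp M$.

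Concretely, I would define the line subbundle $\mathcal{L}=\big(\,\mathrm{span}\{G\}\oplus S(\omega)\,\big)^{\perp}\cap T_G^\perp M$ and argue it is smooth and one-dimensional: $\mathrm{span}\{G\}$ is smooth, and $S(\omega)=\mathrm{span}\{\mu_o,\ldots,\mu_d\}$ is smooth because it equals the image of the smooth, locally-constant-rank $(d+1)$ bilinear form $\omega$ — here the constancy of $\mathrm{rank}\,A^G_{\mu_{d+1}}$ on $U$ is exactly what forces $\mathrm{dim}\,S(\omega)\equiv d+1$ by Lemma 2.6, and~(\ref{16}) shows $S(\omega)$ is spanned by $\omega$ evaluated on the smooth kernel subbundle of $A^G_{\mu_{d+1}}$. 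Since $\langle G,G\rangle=0$ while the metric on $T_G^\perp M$ has the right signature, $\mathrm{span}\{G\}\oplus S(\omega)$ is a rank-$(d+2)$ subbundle on which the induced form is nondegenerate (its radical would have to meet $S(\alpha^G)^\perp$, but $\mu_o$ pairs nontrivially with $G$), so its orthogonal complement $\mathcal{L}$ is a smooth line bundle. Then locally on $U$ (shrinking $U$ if necessary, and using that $U$ is connected so orientation issues are resolvable, or just choosing a sign near $p$ and extending) there is a smooth unit section $\mu_{d+1}$ of $\mathcal{L}$ with $\langle\mu_{d+1},\mu_{d+1}\rangle=1$; this is the desired field, and with this choice $\mu_o,\mu_1,\ldots,\mu_d$ are then determined smoothly as well, so~(\ref{3}) persists smoothly on $U$.

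The main obstacle I anticipate is verifying that the rank-$(d+1)$ piece $\mathrm{span}\{\mu_o,\mu_1,\ldots,\mu_d\}$ — equivalently $S(\omega)$ — really is a smooth subbundle rather than merely a pointwise-defined family, since $\omega$ itself is built from the $\zeta_j$ and $\mu_j$ produced pointwise in Lemma 2.1, and a priori those basis vectors are not known to vary smoothly. The resolution, which I would spell out, is that one never needs the individual $\zeta_j,\mu_j$ to be smooth: $S(\omega)$ is characterized invariantly as (the smooth line $\mathrm{span}\{G\}$ plus) the image of the smooth tensor $\alpha^G$ minus the offending rank-one direction — more precisely $\mathrm{span}\{\mu_{d+1}\}=S(\alpha^G)\cap\big(\mathrm{span}\{G\}\oplus S(\omega)\big)^{\perp}$ is forced, and since in Case I both $S(\alpha^G)$ (rank $d+2$, locally constant) and the flat structure are smooth, a dimension count shows the complement line varies smoothly. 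Once $\mathcal{L}$ is a smooth line bundle over the connected set $U$, producing a global smooth unit section is routine (choose it near $p$, the sign is locally constant). I would keep the exposition at the level of ``image of a morphism of locally constant rank is a subbundle'' plus the explicit formula for $\mathcal{L}$, without grinding through coordinates.
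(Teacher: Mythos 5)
There is a genuine gap, and it sits exactly at the point you flag as ``the main obstacle'': your argument for the smoothness of the line $\mathrm{span}\{\mu_{d+1}\}$ is circular. You propose to obtain it as the orthogonal complement of $\mathrm{span}\{G\}\oplus S(\omega)$ and to get smoothness of $S(\omega)$ from (\ref{16}), i.e.\ as the span of $\alpha^G$ restricted to $\mathrm{Ker}\,A^G_{\mu_{d+1}}$. But both $\omega$ in (\ref{15}) and $\mathrm{Ker}\,A^G_{\mu_{d+1}}$ are defined through the pointwise frame $\mu_o,\mu_1,\dots,\mu_{d+1}$ produced by Lemma 2.1, whose smooth dependence on the point is precisely what is to be proved; ``the image of the smooth tensor $\alpha^G$ minus the offending rank-one direction'' is not a construction unless you already know that direction varies smoothly. (By contrast, $S(\alpha^G)$ itself and its radical $\mathrm{span}\{\mu_o\}=S(\alpha^G)\cap S(\alpha^G)^\perp$ are invariantly defined, and your locally-constant-rank argument does apply to them; the unproved step is only the splitting of the smooth bundle $S(\alpha^G)$ into $S(\omega)$ and $\mathrm{span}\{\mu_{d+1}\}$.)

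The missing ingredient is the link with $\alpha^f$. Since $\mu_o$ is null and orthogonal to $\mu_1,\dots,\mu_{d+1}$, the decomposition (\ref{3}) gives $\left\langle\alpha^G(Z,W),\mu_{d+1}\right\rangle^2=\left|\alpha^G(Z,W)\right|^2-\left|\alpha^f(Z,W)\right|^2$, so the $\mu_{d+1}$-component of $\alpha^G$ is determined \emph{up to sign} by manifestly smooth data. This is what the paper's proof exploits: it fixes the sign by demanding that the nonzero eigenvalue of $A^G_{\mu_{d+1}}$ be positive, deduces that $\langle\alpha^G(X,X),\mu_{d+1}\rangle\geq 0$ is a continuous (hence, where positive, smooth) square root of a smooth function, and then shows the coordinates of $\mu_{d+1}$ in the smooth local frame $\alpha^G(Y_{i_k},Y_{j_k})$ of $S(\alpha^G)$ solve an invertible linear system with smooth coefficients. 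Your bundle-theoretic framing could in principle be repaired by the same identity --- for instance $\mathrm{Ker}\,A^G_{\mu_{d+1}}=\{X:\ |\alpha^G(X,Y)|^2=|\alpha^f(X,Y)|^2\ \ \forall Y\}$ is an invariant description of the kernel distribution --- but some version of this square-root-with-a-chosen-sign step is unavoidable, and as written your proof does not contain it.
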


\begin{proof}[of Lemma 2.7] In each point of $U$ we choose $\mu_{d+1}$ so that the unique nonzero eigenvalue of $A^G_{\mu_{d+1}}$ be positive. We affirm that with this choice $\mu_{d+1}$ is smooth. First we prove that for all continuous tangent vector fields $X,Y\in TU$ the function $\langle\alpha^G(X,Y),\mu_{d+1}\rangle$ is continuous. Since $\alpha^G$ is bilinear and symmetric it suffices to prove that $\langle\alpha^G(X,X),\mu_{d+1}\rangle$ is continuous for all continuous field  $X$ in $TU$. All eigenvalues of $A^G_{\mu_{d+1}}$ being nonnegative, it holds that 
\begin{equation}
\langle\alpha^G(X,X),\mu_{d+1}\rangle =\langle A^G_{\mu_{d+1}}X,X\rangle\geq 0.\label{17}
\end{equation} 
From (\ref{3}) it follows that
$$
\left\langle\alpha^G(Z,W),\mu_{d+1}\right\rangle^2 =\left|\alpha^G(Z,W)\right|^2-\left|\alpha^f(Z,W)\right|^2,\;\forall Z,W\in TM.
$$
Recall that $\mu_o$ has zero length. In particular, $\left\langle\alpha^G(X,X),\mu_{d+1}\right\rangle^2$ is continuous. So $\left\langle\alpha^G(X,X),\mu_{d+1}\right\rangle$ is continuous due to (\ref{17}). (Note that $\left\langle\alpha^G(Z,W),\mu_{d+1}\right\rangle^2$ is smooth when $Z$ and $W$ are smooth.) Now in a fixed $q_o\in U$ consider an orthonormal basis $e_1,\ldots ,e_n$ of $T_{q_o}M$ of eigenvectors of $A^G_{\mu_{d+1}}$ so that $A^G_{\mu_{d+1}}e_1=\rho e_1,\,\rho >0,$ and $A^G_{\mu_{d+1}}e_j=0,\,2\leq j\leq n.$ Extend $e_1,\ldots,e_n$ locally to a smooth orthonormal frame $E_1,\ldots,E_n$ of tangent vectors and define the local smooth fields $Y_1=E_1$ and $Y_j=E_1+E_j,\,2\leq j\leq n.$ Observe that in each point $q$ where the vectors $Y_1(q),\ldots ,Y_n(q)$ are defined they are linearly independent and so they are a basis of $T_qM$. Since $\mbox{dim}\,S\left(\alpha^G\right)=d+2$ on $U$ we can take vectors $\alpha^G(Y_{i_k},Y_{j_k}),\,1\leq k\leq d+2,$  that are a basis of $S\left(\alpha^G\right)$ in a neighborhood of $q_o$. Consider the locally defined continuous functions 
$$
\psi_k= \left\langle\alpha^G(Y_{i_k},Y_{j_k}),\mu_{d+1}\right\rangle,\;1\leq k\leq d+2.
$$  
As we have seen above the functions $\psi_k$ are continuous. We claim that each $\psi_k$ is smooth. In fact, at $q_o$ we have $\psi_k(q_o)=\rho >0.$ Then in a neighborhood of $q_o$ we can suppose that $\psi_k$ is positive since it is continuous. We have observed previously that $\psi_k^2$ is smooth. Therefore, $\psi_k$ is smooth since it is positive. Let us denote $a_1,\ldots ,a_{d+2}$ the coordinates functions of $\mu_{d+1}$ on the basis $\alpha^G(Y_{i_k},Y_{j_k}),\,1\leq k\leq d+2.$ Using (\ref{3}), we can write
\begin{eqnarray*}
\mu_{d+1}=\sum_{k=1}^{d+2}a_k\alpha^{G}(Y_{i_k},Y_{j_k})=&-&\left(\sum_{k=1}^{d+2}a_k\langle Y_{i_k},Y_{j_k}\rangle\right) \mu_{o} \\
&+&\sum_{j=1}^{d}\left\langle \sum_{k=1}^{d+2}a_k\alpha^{f}(Y_{i_k},Y_{j_k}) , \zeta_{j}\right\rangle\mu_{j}+ \left(\sum_{k=1}^{d+2}a_k\psi_k\right)\mu_{d+1}.
\end{eqnarray*}
So the functions $a_k$ satisfy the equations
$$
\sum_{k=1}^{d+2}a_k\langle Y_{i_k},Y_{j_k}\rangle =0,\;\; \sum_{k=1}^{d+2}a_k\alpha^{f}(Y_{i_k},Y_{j_k})=0\;\;\mbox{and}\;\;\sum_{k=1}^{d+2}a_k\psi_k=1.
$$
Taking a smooth orthonormal frame $\xi_1,\ldots ,\xi_d$ of $T_f^\perp M$ in a neighborhood of $q_o,$ we have that the functions $a_k$ satisfy the following system of $d+2$ linear equations  
$$
\sum_{k=1}^{d+2}a_k\langle Y_{i_k},Y_{j_k}\rangle =0,\;\;
\sum_{k=1}^{d+2}a_k\left\langle\alpha^{f}(Y_{i_k},Y_{j_k}),\xi_j\right\rangle=0,\,1\leq j\leq d,\;\;\mbox{and}\;\;\sum_{k=1}^{d+2}a_k\psi_k=1.
$$
The order $d+2$ matrix of the system is an invertible smooth matrix. That it is smooth follows from the smoothly of the functions $Y_l,\,\alpha^f$ and $\psi_k$ for all $l=1,\ldots ,n$ and $k=1,\ldots ,d+2.$ If we consider a vector $(c_1,\ldots ,c_{d+2})\in \Bbb R^{d+2}$ in the kernel of the system, then the equations bellow are satisfied
$$
\sum_{k=1}^{d+2}c_k\langle Y_{i_k},Y_{j_k}\rangle =0,\;\;
\sum_{k=1}^{d+2}c_k\left\langle\alpha^{f}(Y_{i_k},Y_{j_k}),\xi_j\right\rangle=0,\,1\leq j\leq d,\;\;\mbox{and}\;\;\sum_{k=1}^{d+2}c_k\psi_k=0.
$$
By (\ref{3}), we deduce that $\sum_{k=1}^{d+2}c_k\alpha^{G}(Y_{i_k},Y_{j_k})=0.$ Being the vectors $\alpha^{G}(Y_{i_k},Y_{j_k}),$ $1\leq k\leq d+2,$ linearly independent, we have $c_k=0$ for all $k.$ So the matrix of the system is invertible. Then, the functions $a_k$ are smooth and, consequently, $\mu_{d+1}$ is an unitary smooth vector field on $T_G^\perp U.$\end{proof}

\begin{lemma} The null vector field $\mu_{o}$ is smooth on $U$. \end{lemma}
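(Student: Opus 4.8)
The plan is to realize $\mu_{o}$ as the normalized generator of the line bundle $S\left(\alpha^G\right)^{\perp}$ inside the normal bundle $T_G^{\perp}M$, so that its smoothness is forced by the smoothness of $\alpha^G$ and $G$ together with the constancy of $\mbox{dim}\,S\left(\alpha^G\right)$ on $U$ assumed in Case I.

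First I would record that $S\left(\alpha^G\right)$ is a smooth rank-$(d+2)$ subbundle of $T_G^{\perp}M$ over $U$. Indeed, near any $q_{o}\in U$ one may, exactly as in the proof of Lemma 2.7, choose smooth tangent fields $Y_{i_{k}},Y_{j_{k}}$, $1\leq k\leq d+2$, with $\alpha^{G}(Y_{i_{k}},Y_{j_{k}})$ linearly independent at $q_{o}$; by continuity they stay linearly independent on a neighborhood, and since $\mbox{dim}\,S\left(\alpha^G\right)=d+2$ there they span $S\left(\alpha^G\right)$ on that neighborhood. As the induced metric on the $(d+3)$-dimensional bundle $T_G^{\perp}M$ is nondegenerate, we get $\mbox{dim}\,S\left(\alpha^G\right)^{\perp}=1$, so $S\left(\alpha^G\right)^{\perp}$ is a smooth line subbundle of $T_G^{\perp}M$.

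Next I would identify that line with $\mbox{span}\,\{\mu_{o}\}$. By (\ref{3}) we have $\mu_{o}\in S\left(\alpha^G\right)$, and in Case I in fact $S\left(\alpha^G\right)=\mbox{span}\,\{\mu_{o},\mu_{1},\ldots,\mu_{d+1}\}$; on the other hand the inner-product relations of Lemma 2.1 give $\langle\mu_{o},\mu_{o}\rangle=0$ and $\langle\mu_{o},\mu_{j}\rangle=0$ for $1\leq j\leq d+1$, so $\mu_{o}$ is orthogonal to all of $S\left(\alpha^G\right)$. Hence $\mu_{o}\in S\left(\alpha^G\right)\cap S\left(\alpha^G\right)^{\perp}$, and since $S\left(\alpha^G\right)^{\perp}$ is one-dimensional, $S\left(\alpha^G\right)^{\perp}=\mbox{span}\,\{\mu_{o}\}$ at every point of $U$.

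Finally I would invoke the normalization $\langle G,\mu_{o}\rangle=1$. Fixing $q_{o}\in U$ and a nowhere-vanishing smooth local section $\nu$ of $S\left(\alpha^G\right)^{\perp}$, one notes that near $q_{o}$ the vector $\nu$ is a nonzero multiple of $\mu_{o}$, so $\langle G,\nu\rangle\neq 0$; since $G=(1/\phi_{g})J_{\overline{\zeta}}\circ g$ is smooth, $\langle G,\nu\rangle$ is a smooth nowhere-zero function near $q_{o}$, and the uniqueness coming from $\langle G,\mu_{o}\rangle=1$ forces $\mu_{o}=\nu/\langle G,\nu\rangle$ there. Thus $\mu_{o}$ is smooth in a neighborhood of each point of $U$, hence smooth on $U$; the field is the globally well-defined one from Lemma 2.1, and the fact that $\nu$ need only agree with $\mu_{o}$ up to a smooth scalar is precisely why the check is made pointwise. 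There is no real obstacle here: once the intrinsic description of $\mu_{o}$ as the $\langle G,\cdot\rangle=1$ generator of $S\left(\alpha^G\right)^{\perp}$ is in hand, smoothness is automatic.
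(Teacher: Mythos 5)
Your argument is correct, but it is not the route the paper takes. The paper simply reruns the computation of Lemma 2.7: it expands $\mu_{o}$ in the smooth local frame $\alpha^G(Y_{i_k},Y_{j_k})$, $1\leq k\leq d+2$, of $S\left(\alpha^G\right)$ and observes that the coordinates solve the same invertible smooth linear system as before, only with right-hand side $(-1,0,\ldots,0,0)$ instead of $(0,\ldots,0,1)$; this makes the proof of Lemma 2.8 depend on the smoothness of the functions $\psi_k=\langle\alpha^G(Y_{i_k},Y_{j_k}),\mu_{d+1}\rangle$ established in Lemma 2.7, hence on the choice of $\mu_{d+1}$. You instead characterize $\mu_{o}$ intrinsically: by (\ref{3}) and the relations $\langle\mu_o,\mu_o\rangle=\langle\mu_o,\mu_j\rangle=0$ of Lemma 2.1 one has $\mu_o\in S\left(\alpha^G\right)^{\perp}$, and in Case I this orthogonal complement is a smooth line subbundle of the nondegenerate bundle $T_G^{\perp}M$ (smooth because $S\left(\alpha^G\right)$ is locally spanned by the smooth sections $\alpha^G(Y_{i_k},Y_{j_k})$ and has constant rank $d+2$ on $U$), so the normalization $\langle G,\mu_o\rangle=1$ pins down $\mu_o=\nu/\langle G,\nu\rangle$ for any smooth nonvanishing local section $\nu$. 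Each step checks out, and your version has the advantage of being independent of Lemma 2.7 and of the eigenvalue-sign convention used to fix $\mu_{d+1}$; the paper's version has the advantage of reusing verbatim the machinery it has already set up, and of producing the explicit coordinates of $\mu_o$ in the frame of $S\left(\alpha^G\right)$, which is in the same spirit as the rest of Case I.
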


\begin{proof} The proof is identical to one in Lemma 2.7, noticing that the coordinates $a_1,\ldots ,a_{d+2}$ of $\mu_{o}$ on the basis $\alpha^G(Y_{i_k},Y_{j_k}),\,1\leq k\leq d+2,$ are determined by
$$
\sum_{k=1}^{d+2}a_k\langle Y_{i_k},Y_{j_k}\rangle =-1,\;\;
\sum_{k=1}^{d+2}a_k\left\langle\alpha^{f}(Y_{i_k},Y_{j_k}),\xi_j\right\rangle=0,\,1\leq j\leq d,\;\;\mbox{and}\;\;\sum_{k=1}^{d+2}a_k\psi_k=0.
$$
\end{proof}

\begin{lemma} On $U$ the null vector field $\mu_{o}$ is parallel along $\mbox{Ker}\,A^G_{\mu_{d+1}}$.\end{lemma}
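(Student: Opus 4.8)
The plan is to use the Codazzi equation of $G$ in $I\!\!L^{n+d+3}$ evaluated on the normal field $\mu_o$, the decisive point being that $\mu_o$ has vanishing shape operator. From the expression (\ref{3}) for $\alpha^G$ together with the relations $\langle\mu_o,\mu_o\rangle=0$ and $\langle\mu_o,\mu_j\rangle=0$ for $1\le j\le d+1$, taking the $\mu_o$-component one reads off $A^G_{\mu_o}\equiv 0$ on $U$, and taking the $\mu_j$-component for $1\le j\le d$ one gets $A^G_{\mu_j}=A^f_{\zeta_j}$. Also, $G$ being the position vector of $I\!\!L^{n+d+3}$ restricted to $M^n$, its derivative in the ambient connection along any tangent direction $X$ equals $G_\ast X$, so $G$ is a parallel normal field of $G$: $\nabla^\perp_X G=0$, where $\nabla^\perp$ denotes the normal connection of $G$.

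Now fix $p\in U$. Differentiating $\langle\mu_o,G\rangle=1$ and $\langle\mu_o,\mu_o\rangle=0$ and using $\nabla^\perp G=0$, the $G$- and $\mu_o$-components of $\nabla^\perp_X\mu_o$ vanish, so (the field $\mu_o$ being smooth on $U$ by Lemma 2.8)
$$\nabla^\perp_X\mu_o=\sum_{j=1}^{d}t_j(X)\,\mu_j+t_{d+1}(X)\,\mu_{d+1},\qquad X\in T_pM .$$
Writing $\gamma(X):=\sum_{j=1}^d t_j(X)\,\zeta_j\in T^\perp_f M$ we have $A^G_{\nabla^\perp_X\mu_o}=A^f_{\gamma(X)}+t_{d+1}(X)\,A^G_{\mu_{d+1}}$, and the Codazzi equation of $G$, together with $A^G_{\mu_o}\equiv 0$, gives $A^G_{\nabla^\perp_X\mu_o}Y=A^G_{\nabla^\perp_Y\mu_o}X$, i.e.
$$A^f_{\gamma(X)}Y+t_{d+1}(X)\,A^G_{\mu_{d+1}}Y=A^f_{\gamma(Y)}X+t_{d+1}(Y)\,A^G_{\mu_{d+1}}X ,\qquad X,Y\in T_pM .$$

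Restricting $X,Y$ to $\Delta:=\mbox{Ker}\,A^G_{\mu_{d+1}}$ annihilates the $\mu_{d+1}$-terms and leaves $A^f_{\gamma(X)}Y=A^f_{\gamma(Y)}X$ for $X,Y\in\Delta$. Hence, if $X\in\Delta$ and $\gamma(X)=0$, then $A^f_{\gamma(Y)}X=0$ for every $Y\in\Delta$, so $A^f_\mu X=0$ for every $\mu$ in $U^s:=\gamma(\Delta)$, i.e. $X\in N(\alpha^f_{U^s})$; here $s:=\mbox{dim}\,\gamma(\Delta)\le d$. Consequently $\nu_s^c\ge\mbox{dim}\,N(\alpha^f_{U^s})\ge\mbox{dim}\,\Delta-s=n-1-s$, and for $s\ge 1$ this contradicts the hypothesis $\nu_s^c\le n-2s-2$. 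Therefore $\gamma\equiv 0$ on $\Delta$; since $\zeta_1,\dots,\zeta_d$ are independent, $t_1(X)=\dots=t_d(X)=0$ for all $X\in\Delta$.

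It remains to kill the $\mu_{d+1}$-component. In Case I one has $\mbox{rank}\,A^G_{\mu_{d+1}}\equiv 1$, so $A^G_{\mu_{d+1}}=\lambda\,\langle\,\cdot\,,e\rangle\,e$ for some $\lambda\ne 0$ and a unit vector $e$ with $\mbox{span}\{e\}=\Delta^\perp$. Putting $Y=e$ and $X\in\Delta$ in the Codazzi identity above, and using $\gamma|_\Delta=0$ and $A^G_{\mu_{d+1}}X=0$, one gets $\lambda\,t_{d+1}(X)\,e=A^f_{\gamma(e)}X$ for $X\in\Delta$, so $A^f_{\gamma(e)}(\Delta)\subset\mbox{span}\{e\}$; if $\gamma(e)\ne 0$ this forces $\mbox{dim}(\Delta\cap\mbox{Ker}\,A^f_{\gamma(e)})\ge n-2$ and hence $\nu_1^c\ge n-2$, contradicting $\nu_1^c\le n-4$. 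Thus $\gamma(e)=0$, whence $t_{d+1}(X)=0$ for $X\in\Delta$, and altogether $\nabla^\perp_X\mu_o=0$ for all $X\in\Delta$, as claimed. Since the argument is carried out at each point, no smoothness of the $\zeta_j$ or $\mu_j$ ($1\le j\le d$) is used, only that of $\mu_o$. I expect the last paragraph to be the delicate step: the Codazzi identity restricted to $\Delta$ controls only the $\mu_1,\dots,\mu_d$-components of $\nabla^\perp_X\mu_o$, so one has to re-enter Codazzi along a direction transverse to $\Delta$ and invoke the bound on $\nu_1^c$ to remove the last component; the observation $A^G_{\mu_o}\equiv 0$ is what makes the main nullity estimate close.
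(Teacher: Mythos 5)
Your proof is correct, and its skeleton is the paper's: the $G$- and $\mu_{o}$-components of $\tilde{\nabla}^{\perp}_{X}\mu_{o}$ are killed by differentiating $\langle \mu_{o},G\rangle =1$ and $\langle \mu_{o},\mu_{o}\rangle =0$, and everything else rests on the Codazzi symmetry $A^{G}_{\tilde{\nabla}^{\perp}_{X}\mu_{o}}Y=A^{G}_{\tilde{\nabla}^{\perp}_{Y}\mu_{o}}X$ (the paper's (19)) combined with conformal-nullity bounds. Where you genuinely diverge is in how the two estimates are run. The paper lets $X$ range over all of $T_{q}M$, introduces the auxiliary rank-one maps $\psi_{j}$ to absorb the $\mu_{d+1}$-term of (3), and obtains $\nu^{c}_{r}\geq n-2r-1$, which needs the full strength of the hypothesis $\nu^{c}_{r}\leq n-2r-2$; once the $L$-component of $\tilde{\nabla}^{\perp}\mu_{o}$ vanishes in every direction, the $\mu_{d+1}$-component on $\mbox{Ker}\,A^{G}_{\mu_{d+1}}$ falls out of (19) evaluated at $X=E$. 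You instead restrict the symmetry to $\Delta=\mbox{Ker}\,A^{G}_{\mu_{d+1}}$ from the outset, where the $\mu_{d+1}$-terms cancel automatically, and this yields the sharper bound $\nu^{c}_{s}\geq n-1-s$ (so you only use $\nu^{c}_{s}\leq n-s-2$); the price is that the restricted symmetry says nothing about $t_{d+1}$, so you must re-enter Codazzi along the transverse eigenvector $e$, eliminate $\gamma(e)$ by a rank-one estimate against $\nu^{c}_{1}\leq n-4$, and only then conclude $t_{d+1}|_{\Delta}=0$. Both arguments close: yours is more economical with the hypotheses and makes transparent that the $\mu_{d+1}$-direction is the delicate one, while the paper's single global estimate disposes of the $L$-component in all directions at once --- a fact you recover anyway, since your last step also gives $\gamma(e)=0$.
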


\begin{proof} We denote by $\tilde{\nabla}$ the connection of the Lorentz space $I\!\!L^{n+d+3}$ and by $\tilde{\nabla}^\perp$ the normal connection of the immersion $G.$ First note that the coordinate of $\tilde{\nabla}^\perp_X\mu_{o}$  in the direction $\mu_o$ is zero for all $X\in TU.$ In fact, we have $\tilde{\nabla}^\perp_XG=0$ since $\tilde{\nabla}_XG=G_*X$ is tangent. Now if we take derivatives on $\langle G,\mu_{o}\rangle=1$ in the direction $X,$ we obtain that $\langle G,\tilde{\nabla}^\perp_X\mu_{o}\rangle=0$ and the coordinate of $\tilde{\nabla}^\perp_X\mu_{o}$  in the direction $\mu_o$ is zero by (\ref{13}). Being $\mu_{o}$ a vector field of zero length, the vector $\tilde{\nabla}^\perp_X\mu_{o}$ has not component in the direction $G$ by (\ref{13}). Now we claim that $\tilde{\nabla}^\perp_X\mu_{o}$ also has not component on 
$L=\mbox{span}\,\{\,\mu_1,\,\mu_2,\ldots ,\mu_d\,\}.$ In fact, consider $q\in U$ and the linear map
$$\begin{tabular}{ccc}
$T_qM$&$\stackrel{\Psi}{\longrightarrow}$&$\mbox{span}\,\left\{\,\mu_1,\ldots \mu_d,\,\mu_{d+1}\,\,\right\}$\\
$X $&$   \longrightarrow                 $&$\tilde{\nabla}^\perp_X\mu_{o}$.
\end{tabular}$$
Using the Codazzi's equations for $\alpha^G,$ the compatibility of the normal connection of the immersion $G$ with the metric in $T_G^\perp M$ and that $\left\langle \alpha^G,\mu_o\right\rangle\equiv 0,$ it is a straightforward calculation to see that
\begin{equation}
\left\langle \alpha^G(X,Y),\tilde{\nabla}^\perp_Z\mu_{o}\right\rangle=\left\langle \alpha^G(Z,Y),\tilde{\nabla}^\perp_X\mu_{o}\right\rangle,
\;\forall X,Y,Z\in T_qM.\label{18}
\end{equation}
This equation is equivalent to 
\begin{equation}
A^G_{\tilde{\nabla}^\perp_Z\mu_{o}}X=A^G_{\tilde{\nabla}^\perp_X\mu_{o}}Z ,\;\forall X,Z\in T_qM.\label{19}
\end{equation}
Let $\Psi_L$ be the component of $\Psi$ on  $L=\mbox{span}\,\{\,\mu_1,\,\mu_2,\ldots ,\mu_d\,\},$ that is, 
$$
\Psi_L(Z)=\sum_{i=1}^d\left\langle \tilde{\nabla}^\perp_Z\mu_{o},\mu_i\right\rangle\mu_i,\;\forall Z\in T_qM.
$$
Suppose that $\mbox{dim}\,\left(\mbox{Im}\,\Psi_L\right)=r$ and $r\geq 1.$ Consider a basis $\Psi_L\left(Z_1\right),\ldots ,\Psi_L\left(Z_r\right)$ of $\left(\mbox{Im}\,\Psi_L\right).$  Observe that $d\geq r\geq \mbox{dim}\,\left(\mbox{Im}\,\Psi\right)-1.$ Taking  $\xi_j=\sum_{i=1}^d\langle \Psi_L\left(Z_j\right), \mu_i\rangle \zeta_i,\,1\leq j\leq r,$ and using (\ref{3}) and (\ref{18}), we obtain that
\begin{equation}
A^f_{\xi_j}X=A^G_{\tilde{\nabla}^\perp_X\mu_{o}}Z_j -\rho\langle X,E\rangle\left\langle \tilde{\nabla}^\perp_{Z_j}\mu_{o}, \mu_{d+1} \right\rangle E,\label{20}
\end{equation}
being $E$ an unitary eigenvector such that $A^G_{\mu_{d+1}}E=\rho\,E$, $X\in T_qM$ and $j,\,1\leq j\leq r$. For each $j,\,1\leq j\leq r,$ consider the linear map $\psi_j\;\colon T_qM\to \mbox{span}\,\{E\}$ given by 
$$
\psi_j(X)=-\rho\langle X,E\rangle\left\langle \tilde{\nabla}^\perp_{Z_j}\mu_{o}, \mu_{d+1} \right\rangle E=A^f_{\xi_j}X-A^G_{\tilde{\nabla}^\perp_X\mu_{o}}Z_j.
$$
Notice that if $X\in \mbox{Ker}\,\Psi\cap\left(\bigcap_{j=1}^r\mbox{Ker}\,\psi_j\right)$ then $X\in \bigcap_{j=1}^r\mbox{Ker}\,A^f_{\xi_j}$ due to (\ref{20}). We have $\mbox{dim}\,\left(\bigcap_{j=1}^r\mbox{Ker}\,\psi_j\right)\geq n-r.$ Then, for the $r$-dimensional space $\mathcal{L}=\mbox{span}\,\{\,\xi_1,\ldots,\xi_r\,\},$ using the formula
$$ 
\mbox{dim}\,(\,L_{1} + L_{2}\,) = \mbox{dim}\,L_{1} + \mbox{dim}\,L_{2} - \mbox{dim}\,L_{1} \cap L_{2}, 
$$ 
valid for any two finite dimensional vector subspaces of any vector space, we obtain
\begin{eqnarray*}
\nu^c_r&\geq& \mbox{dim}\,N\left(\alpha_{\mathcal{L}}^f\right)= \mbox{dim}\,\left(\bigcap_{j=1}^r\mbox{Ker}\,A^f_{\xi_j}\right)\\
&\geq& \mbox{dim}\,\left[\mbox{Ker}\,\Psi\cap\left(\bigcap_{j=1}^r\mbox{Ker}\,\psi_j\right)\right]\geq \left(n-\mbox{dim}\,\left(\mbox{Im}\,\Psi\right)\right)+(n-r)-n\geq n-2r-1
\end{eqnarray*}
which is in contradiction with our hypothesis on the $r$-conformal nullity. Then, $r=0$ and $\tilde{\nabla}^\perp_Z\mu_{o}=\left\langle \tilde{\nabla}^\perp_Z\mu_{o},\mu_{d+1}\right\rangle \mu_{d+1}$ for all $Z\in T_qM.$ Now from (\ref{19}) we deduce that
$$
\left\langle \tilde{\nabla}^\perp_Z\mu_{o},\mu_{d+1}\right\rangle A^G_{\mu_{d+1}}X=\left\langle \tilde{\nabla}^\perp_X\mu_{o},\mu_{d+1}\right\rangle A^G_{\mu_{d+1}}Z
$$
for all $X,\,Z\in T_qM.$ Thus, if we choose $X=E$ and $Z\in \mbox{Ker}\,A^G_{\mu_{d+1}},$ we have
$$\rho\left\langle \tilde{\nabla}^\perp_Z\mu_{o},\mu_{d+1}\right\rangle E=0$$
for all $Z\in \mbox{Ker}\,A^G_{\mu_{d+1}}.$ Hence $\tilde{\nabla}^\perp_Z\mu_{o}=0$ for all $Z\in \mbox{Ker}\,A^G_{\mu_{d+1}}.$ Recall that our choice for $\mu_{d+1}$ is so that the unique nonzero eigenvalue $\rho$ of the smooth linear map $A^G_{\mu_{d+1}}$ is positive.\end{proof}

\begin{lemma} On $U$ the unitary vector field $\mu_{d+1}$ is parallel along $\mbox{Ker}\,A^G_{\mu_{d+1}}.$\end{lemma}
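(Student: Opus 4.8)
The plan is to prove that $\tilde{\nabla}^\perp_X\mu_{d+1}=0$ for every $X\in\mbox{Ker}\,A^G_{\mu_{d+1}}$ by first locating $\tilde{\nabla}^\perp_X\mu_{d+1}$ inside the subbundle $L=\mbox{span}\{\mu_1,\ldots,\mu_d\}$ and then eliminating that last component with the Codazzi equation of $G$ together with the hypothesis on the conformal nullity of $f$. Fix $p\in U$. For $X\in\mbox{Ker}\,A^G_{\mu_{d+1}}$ I would expand $\tilde{\nabla}^\perp_X\mu_{d+1}$ in the basis $G,\mu_o,\mu_1,\ldots,\mu_{d+1}$ of $T^\perp_GM$ by means of (\ref{13}). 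The coordinate on $\mu_{d+1}$ is $\langle\tilde{\nabla}^\perp_X\mu_{d+1},\mu_{d+1}\rangle=0$ since $\mu_{d+1}$ has unit length; the coordinate on $\mu_o$ is $\langle\tilde{\nabla}^\perp_X\mu_{d+1},G\rangle=-\langle\mu_{d+1},\tilde{\nabla}^\perp_XG\rangle=0$ because $\tilde{\nabla}^\perp_XG=0$; and the coordinate on $G$ is $\langle\tilde{\nabla}^\perp_X\mu_{d+1},\mu_o\rangle=-\langle\mu_{d+1},\tilde{\nabla}^\perp_X\mu_o\rangle$, which vanishes because $\tilde{\nabla}^\perp_X\mu_o=0$ for $X\in\mbox{Ker}\,A^G_{\mu_{d+1}}$ by Lemma 2.9. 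Hence $\tilde{\nabla}^\perp_X\mu_{d+1}=\sum_{i=1}^{d}c_i(X)\mu_i$ with $c_i(X)=\langle\tilde{\nabla}^\perp_X\mu_{d+1},\mu_i\rangle$, and it remains to show that each $c_i$ vanishes on $\mbox{Ker}\,A^G_{\mu_{d+1}}$.

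The next step brings in the Codazzi equation. Write $A=A^G_{\mu_{d+1}}$; in Case I it has rank one on $U$ and, by Lemma 2.7, is smooth there, so $\mbox{Ker}\,A$ is a smooth distribution of rank $n-1$ and one can pick a smooth unit field $E$ spanning $(\mbox{Ker}\,A)^\perp$ with $AE=\rho E$, $\rho>0$. From (\ref{3}) one reads off $A^G_{\mu_i}=A^f_{\zeta_i}$ for $1\le i\le d$. Denoting by $\nabla$ the Levi-Civita connection of $M$, the Codazzi equation of $G$ in $I\!\!L^{n+d+3}$ for the smooth normal field $\mu_{d+1}$ reads
$$\nabla_X(AY)-A\nabla_XY-A^G_{\tilde{\nabla}^\perp_X\mu_{d+1}}Y=\nabla_Y(AX)-A\nabla_YX-A^G_{\tilde{\nabla}^\perp_Y\mu_{d+1}}X.$$
Taking $X,Y$ to be local sections of $\mbox{Ker}\,A$ makes $AX=AY=0$, so this reduces to $-A(\nabla_XY-\nabla_YX)=A^G_{\tilde{\nabla}^\perp_X\mu_{d+1}}Y-A^G_{\tilde{\nabla}^\perp_Y\mu_{d+1}}X$. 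Its left–hand side equals $\rho(\langle Y,\nabla_XE\rangle-\langle X,\nabla_YE\rangle)E$, using $\langle X,E\rangle\equiv0\equiv\langle Y,E\rangle$; while, by the first paragraph, $A^G_{\tilde{\nabla}^\perp_X\mu_{d+1}}=\sum_ic_i(X)A^f_{\zeta_i}=A^f_{\gamma(X)}$ with $\gamma(X)=\sum_ic_i(X)\zeta_i\in T^\perp_fM$. Hence $A^f_{\gamma(X)}Y-A^f_{\gamma(Y)}X$ is a multiple of $E$ for all $X,Y\in\mbox{Ker}\,A$, and projecting this relation onto $\mbox{Ker}\,A=E^\perp$ yields
$$\langle\alpha^f(Y,Z),\gamma(X)\rangle=\langle\alpha^f(X,Z),\gamma(Y)\rangle,\qquad X,Y,Z\in\mbox{Ker}\,A.$$

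To conclude I would argue by contradiction with the conformal $r$-nullity, in the spirit of Lemmas 2.6 and 2.9. If $\gamma$ is not identically zero on $\mbox{Ker}\,A$ at $p$, put $\mathcal{L}=\gamma(\mbox{Ker}\,A)\subseteq T^\perp_fM$ and $r=\mbox{dim}\,\mathcal{L}$, so $1\le r\le d$. For $Y$ in the $(n-1-r)$-dimensional subspace $\mbox{Ker}(\gamma|_{\mbox{Ker}\,A})$ the last identity gives $P_{\mathcal{L}}\alpha^f(Y,Z)=0$ for every $Z\in\mbox{Ker}\,A$, so the space $N_0$ of $Y\in T_pM$ with $P_{\mathcal{L}}\alpha^f(Y,Z)=0$ for all $Z\in\mbox{Ker}\,A$ has dimension at least $n-1-r$; imposing the $r$ further linear conditions $P_{\mathcal{L}}\alpha^f(Y,E)=0$ then produces, since $T_pM=\mbox{Ker}\,A\oplus\mbox{span}\{E\}$, a subspace of $N(\alpha^f_{\mathcal{L}})$ of dimension at least $n-1-2r$. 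Therefore $\nu^c_r(p)\ge\mbox{dim}\,N(\alpha^f_{\mathcal{L}})\ge n-2r-1$, contradicting $\nu^c_r\le n-2r-2$. Hence $\gamma|_{\mbox{Ker}\,A}\equiv0$ throughout $U$, i.e.\ $\tilde{\nabla}^\perp_X\mu_{d+1}=0$ for every $X\in\mbox{Ker}\,A^G_{\mu_{d+1}}$, which is the assertion.

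The step I expect to be the crux is the observation that, once $X$ and $Y$ are taken in $\mbox{Ker}\,A$, the left–hand side of the Codazzi identity is a multiple of the single vector $E$: this is precisely what makes the projection onto $\mbox{Ker}\,A$ annihilate it and leaves exactly the symmetry of $\alpha^f|_{\mbox{Ker}\,A}$ against $\gamma$ needed to violate the bound $\nu^c_r\le n-2r-2$. Everything else is routine bookkeeping with the frame of Lemma 2.1 and the smoothness and parallelism already established in Lemmas 2.7--2.9.
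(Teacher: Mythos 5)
Your proposal is correct and follows essentially the same route as the paper: first the components of $\tilde{\nabla}^\perp_X\mu_{d+1}$ along $G$, $\mu_o$ and $\mu_{d+1}$ are killed using Lemma 2.9, the identity $\tilde{\nabla}^\perp_XG=0$ and $|\mu_{d+1}|=1$, and then the Codazzi equation restricted to $\mbox{Ker}\,A^G_{\mu_{d+1}}$ produces a relation that forces $\nu^c_r\geq n-2r-1$, contradicting the hypothesis. Your explicit kernel count via $\mbox{Ker}(\gamma|_{\mbox{Ker}\,A})$ is just an equivalent reorganization of the paper's ``proceed as in Lemma 2.9 with $\mbox{Ker}\,A^G_{\mu_{d+1}}$ in place of $T_qM$'' step, yielding the same bound.
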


\begin{proof} Notice that the coordinate of $\tilde{\nabla}^\perp_X\mu_{d+1}$  in the direction $\mu_o$ is zero due to 
$$
\langle G,\tilde{\nabla}^\perp_X\mu_{d+1}\rangle=-\langle \tilde{\nabla}^\perp_XG,\mu_{d+1}\rangle=-\langle G_*X,\mu_{d+1}\rangle=0.
$$
By Lemma 2.9, for $X\in \mbox{Ker}\,A^G_{\mu_{d+1}},$ $\tilde{\nabla}^\perp_X\mu_{d+1}$ also has not component in the direction $G$ since $\langle \mu_o,\tilde{\nabla}^\perp_X\mu_{d+1}\rangle=-\langle \tilde{\nabla}^\perp_X\mu_o,\mu_{d+1}\rangle=0.$ Being $\mu_{d+1}$ an unitary vector field, the vector $\tilde{\nabla}^\perp_X\mu_{d+1}$ has not component in the direction $\mu_{d+1}.$ Thus, we only need proving that $\tilde{\nabla}^\perp_X\mu_{d+1}$ also has not component on 
$L=\mbox{span}\,\{\,\mu_1,\,\mu_2,\ldots ,\mu_d\,\}$ for all $X\in \mbox{Ker}\,A^G_{\mu_{d+1}}.$ Consider the linear map
$$\begin{tabular}{ccc}
$\mbox{Ker}\,A^G_{\mu_{d+1}}$&$\stackrel{\Phi}{\longrightarrow}$&$\mbox{span}\,\left\{\,\mu_1,\ldots \mu_d\,\right\}$\\
$                    X      $&$   \longrightarrow              $&$ \tilde{\nabla}^\perp_X\mu_{d+1}$.
\end{tabular}$$ 
Our choice for $\mu_{d+1}$ so that the unique nonzero eigenvalue $\rho$ of the smooth linear map $A^G_{\mu_{d+1}}$ is positive, for a well known argument, implies that $\rho$ is smooth. Simple calculations shows that in an arbitrary point of $U$ the $(n-1)$-dimensional distribution $\mbox{Ker}\,A^G_{\mu_{d+1}}$ is given by
$$
\mbox{Ker}\,A^G_{\mu_{d+1}}=\left\{\,\left(A^G_{\mu_{d+1}}-\rho\,I\right)W\;|\;W\in TU\,\right\}.
$$
So $\mbox{Ker}\,A^G_{\mu_{d+1}}$ is a differentiable distribution on $U.$ Therefore, if in some point $q_o$ of $U$ we consider  $X\in \mbox{Ker}\,A^G_{\mu_{d+1}},$ we can take in a neighborhood of $q_o$ in $U$ a differentiable extension of $X$ that lies on $\mbox{Ker}\,A^G_{\mu_{d+1}}$. With these observations, using the Codazzi's equations for $\alpha^G$ and the compatibility of the normal connection of the immersion $G$ with the metric in $T_G^\perp M,$ it is a straightforward calculation to see that
$$
A^G_{\Phi (Z)}X=A^G_{\Phi (X)}Z +\rho\langle [X,Z], E\rangle E,\;\mbox{for all}\; X,\,Z\in \mbox{Ker}\,A^G_{\mu_{d+1}}.
$$
Suppose that $\mbox{dim}\,\left(\mbox{Im}\,\Phi\right)=r$ and $r\geq 1.$ At this point we proceed like in proof of Lemma 2.9, with  $\mbox{Ker}\,A^G_{\mu_{d+1}}$ instead of $T_qM,$ for deduce that $\mbox{dim}\,\left(\mbox{Im}\,\Phi\right)=0$ and Lemma 2.10 has been proved.\end{proof}

Now we consider the linear isometry $\tau\colon\;T_F^\perp U\to \left(\mbox{span}\,\{\mu_{d+1}\}\right)^\perp\subset T_G^\perp U$ given by 
$$
\tau(F)=G,\;\tau (\zeta)=\mu_o\;\;\;\mbox{and}\;\;\;\tau(\zeta_i)=\mu_i,\,1\leq i\leq d.
$$
Notice that $\tau$ is smooth since 
\begin{equation}
T_F^\perp M=\mbox{span}\,\{F\}\oplus S\left(\alpha^F\right),\;\;
\left(\mbox{span}\,\{\mu_{d+1}\}\right)^\perp=\mbox{span}\,\{\,G\,\}\oplus S(\omega)\label{21}
\end{equation}
and $\tau \left(\alpha^F(X,Y)\right)=\omega(X,Y)$, $\forall X,\,Y\in TM$, due to (\ref{1}) and  (\ref{15}). Observe also that our hypothesis on the $1$-conformal nullity of $f$ implies that $S\left(\alpha^F\right)$ is a $(d+1)$-dimensional subspace of $T_F^\perp M$.

Since the $(n-1)$-dimensional distribution $\mbox{Ker}\,A^G_{\mu_{d+1}}$ is smooth on $U$, if $E$ is an unitary  differentiable vector field orthogonal to $\mbox{Ker}\,A^G_{\mu_{d+1}}$ then $E$ is in each point an eigenvector of $A^G_{\mu_{d+1}}$ corresponding to the eigenvalue $\rho$. Consequently, $E$ can be chosen smooth on $U$. Consider the vector bundle isometry $T\colon\;T_F^\perp U\oplus \mbox{span}\,\{E\}\to \left(\mbox{span}\,\{\mu_{d+1}\}\right)^\perp \oplus \mbox{span}\,\{E\}$ defined as $T(\delta +cE)=\tau (\delta)+cE$ for all $\delta +cE\in  T_F^\perp U\oplus \mbox{span}\,\{E\}.$ Take the subbundle $\Lambda\subset \left(\mbox{span}\,\{\mu_{d+1}\}\right)^\perp \oplus \mbox{span}\,\{E\}$ whose bundles are the orthogonal complement of the subspace generated by $\tilde{\nabla}_E\mu_{d+1}=-\rho E+\tilde{\nabla}_E^\perp\mu_{d+1}.$ We observe that, being $\left(\mbox{span}\,\{\mu_{d+1}\}\right)^\perp \oplus \mbox{span}\,\{E\}$ a $(d+3)$-dimensional nondegenerate vector bundle, $\Lambda$ is a $(d+2)$-dimensional nondegenerate subbundle and that $\Lambda$ is transversal to $TU.$ Now define the $(d+2)$-dimensional nondegenerate subbundle $\Omega$ of $T_F^\perp U\oplus \mbox{span}\,\{E\},$ transversal to $TU,$ by setting 
$T(\Omega)=\Lambda.$ Let $\mathcal{F}\colon\;\Omega \to I\!\!L^{n+d+2}$ be defined by $\mathcal{F}(q,\vartheta)=F(q)+\vartheta$ for all $q\in U$ and all $\vartheta$ in the fibre $\Omega_q.$  Since the fibres of $\Omega$ are transversal to $TU$ and $F$ is an embedding on $U$, $\mathcal{F}$ is a diffeomorphism from a neighborhood $\Omega_0$ of the zero section in $\Omega$ into a neighborhood $\mathcal{W}$ of $F(U)$ in $I\!\!L^{n+d+2}.$ Consider $\Omega_0$ endowed with the flat metric induced by $\mathcal{F}.$ Restrict to $\Omega_0,$
$\mathcal{F}$ become an isometry onto $\mathcal{W}.$  From now on $\mathcal{F}$ will stand for this restriction. Now let $\mathcal{G}\colon\;\Omega_0 \to I\!\!L^{n+d+3}$ be defined by $\mathcal{G}(q,\vartheta)=G(q)+T(\vartheta)$ for all $q\in U$ and $\vartheta \in \Omega_q.$ We claim that $\mathcal{G}$ is an isometric immersion. In fact, for all  differentiable curve $\theta(t)=(\gamma (t),\delta (\gamma(t))+c(t)E(\gamma (t)))\in \Omega_0$ we have $\mathcal{G}(\theta (t))=G(\gamma(t))+\tau(\delta (\gamma(t)))+c(t)E(\gamma (t)).$ Consequently,
\begin{eqnarray}
\left(\mathcal{G}_{*}\right)_{\theta(0)}\theta^\prime(0)&=& \left(G_{*}\right)_{\gamma(0)}\gamma^\prime(0)+\tilde{\nabla}_{\gamma^\prime(0)} \tau(\delta)+\tilde{\nabla}_{\gamma^\prime(0)} cE\label{22}\\
&=&\left(G_{*}\right)_{\gamma(0)}\left[\gamma^\prime(0)-A^G_{\tau(\delta)}\gamma^\prime(0)+\nabla_{\gamma^\prime(0)} cE\right]
+\tilde{\nabla}^\perp_{\gamma^\prime(0)}\tau(\delta) +\alpha^G(\gamma^\prime(0),cE).\nonumber
\end{eqnarray}

\noindent{Now} if we put $\gamma^\prime(0)=aE+v,$ being $v$ the component of $\gamma^\prime(0)$ in $\mbox{Ker}\,A^G_{\mu_{d+1}},$ we have
\begin{eqnarray}
&\,&\left\langle \tilde{\nabla}^\perp_{\gamma^\prime(0)}\tau(\delta) +\alpha^G(\gamma^\prime(0),cE),\mu_{d+1}\right\rangle=
\left\langle \tilde{\nabla}_{\gamma^\prime(0)}\left(\tau(\delta) + cE\right),\mu_{d+1}\right\rangle\label{23}\\
&=&-\left\langle \tau(\delta) + cE,\tilde{\nabla}_{\gamma^\prime(0)}\mu_{d+1}\right\rangle\nonumber
=-\left\langle \tau(\delta) + cE,a\tilde{\nabla}_E\mu_{d+1}-A^G_{\mu_{d+1}}v+\tilde{\nabla}^\perp_v\mu_{d+1}\right\rangle =0.\nonumber
\end{eqnarray}

\noindent{Recall} that $\tau(\delta) + cE\in \Lambda$ is orthogonal to $\tilde{\nabla}_E\mu_{d+1}$ and that, by Lemma 2.10, $\tilde{\nabla}^\perp_v\mu_{d+1}=0.$ For to finish the proof of the claim we will need of the following 

\begin{lemma} For $X\in TU$ and $\mu \in L=\left(\mbox{span}\,\{\mu_{d+1}\}\right)^\perp,$ let $\nabla^\prime_X\mu$ be given by $\nabla^\prime_X\mu=\tau\left({}^F\nabla_X^\perp\tau^{-1}(\mu)\right),$ being ${}^F\nabla^\perp$ the normal connection of the immersion $F,$  and let $\hat{\nabla}_X\mu$ be the component of $\tilde{\nabla}^\perp_X\mu$ on $L.$ Then, it holds that $\nabla^\prime_X\mu=\hat{\nabla}_X\mu.$ \end{lemma}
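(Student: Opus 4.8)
The plan is to show that the tensor $D$ on $U$ defined by $D_X:=\nabla^\prime_X-\hat{\nabla}_X$, acting on sections of $L$, vanishes identically; this is exactly the assertion of the lemma. Both $\nabla^\prime$ and $\hat{\nabla}$ are metric connections on the bundle $L$: $\nabla^\prime$ because ${}^F\nabla^\perp$ is metric and $\tau$ is a bundle isometry, and $\hat{\nabla}$ because the component of $\tilde{\nabla}^\perp_X\mu$ discarded in its definition lies along $\mu_{d+1}\in L^\perp$. In particular $D$ is tensorial in $X$, so it suffices to check $D_X\equiv 0$ separately for $X$ in the two pieces of the splitting $TU=\mbox{Ker}\,A^G_{\mu_{d+1}}\oplus\mbox{span}\{E\}$ (recall we are in Case I, so $\mbox{Ker}\,A^G_{\mu_{d+1}}$ has dimension $n-1$ and $E$ is the unit $\rho$-eigenvector of $A^G_{\mu_{d+1}}$), and it suffices to check it on a spanning set of $L$.

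Two of the $d+2$ directions of $L$ cost nothing. The fields $F$ and $G$ are normal to the immersions $F$ and $G$, and $\tilde{\nabla}_XF=F_*X$, $\tilde{\nabla}_XG=G_*X$ are tangent, so ${}^F\nabla^\perp F\equiv 0$ and $\tilde{\nabla}^\perp G\equiv 0$; since $\tau(F)=G$ this gives $D_XG=0$. The vector $\zeta$ is a constant section of $T_F^\perp M$ with vanishing shape operator — by (\ref{1}), $\langle A^F_\zeta X,Y\rangle=-\langle X,Y\rangle\langle\zeta,\zeta\rangle+\langle\alpha^f(X,Y),\zeta\rangle=0$ because $\langle\zeta,\zeta\rangle=0$ and $\alpha^f$ is (under the identification of (\ref{1})) tangent to $H_\zeta\cap\mathcal{V}^{n+d+1}$, hence orthogonal to $\zeta$ — so ${}^F\nabla^\perp\zeta\equiv 0$; on the other hand the proof of Lemma 2.9 establishes $\tilde{\nabla}^\perp_X\mu_o\in\mbox{span}\{\mu_{d+1}\}$ for every $X$, hence $\hat{\nabla}\mu_o\equiv 0$, and since $\tau(\zeta)=\mu_o$ we get $D_X\mu_o=0$. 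Thus $G$ and $\mu_o$ are parallel for both connections, and combining this with metricity, with $\langle\mu_j,G\rangle=\langle\mu_j,\mu_o\rangle=\langle\mu_j,\mu_{d+1}\rangle=0$, and with (\ref{13}), the vectors $\nabla^\prime_X\mu_j$ and $\hat{\nabla}_X\mu_j$, hence $D_X\mu_j$, lie in $\mbox{span}\{\mu_1,\ldots,\mu_d\}$ for every $j$ and every $X$.

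It remains to kill $D_X\mu_j$. Apply $\tau$ to the Codazzi equation of $F$ in $I\!\!L^{n+d+2}$: using $\tau(\alpha^F(X,Y))=\omega(X,Y)$ this reads $(\nabla^\prime_X\omega)(Y,Z)=(\nabla^\prime_Y\omega)(X,Z)$. Project the Codazzi equation of $G$ in $I\!\!L^{n+d+3}$ onto $L$: by (\ref{3}) and (\ref{15}), $\omega$ is the $L$-component of $\alpha^G$ with complementary component $\langle A^G_{\mu_{d+1}}\,\cdot,\cdot\,\rangle\mu_{d+1}$, so this reads $(\hat{\nabla}_X\omega)(Y,Z)=(\hat{\nabla}_Y\omega)(X,Z)$ plus a correction term built only from $A^G_{\mu_{d+1}}$ and the $L$-component of $\tilde{\nabla}^\perp_X\mu_{d+1}$. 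Subtracting, and then restricting $Z$ to $\mbox{Ker}\,A^G_{\mu_{d+1}}$ so that the correction term disappears, yields $D_X(\omega(Y,Z))=D_Y(\omega(X,Z))$ for all $X,Y\in TU$ and all $Z\in\mbox{Ker}\,A^G_{\mu_{d+1}}$. Now take $X,Y$ also in $\mbox{Ker}\,A^G_{\mu_{d+1}}$; expanding $\omega$ by (\ref{15}), using $D\mu_o\equiv 0$, and comparing $\mu_k$-components, this becomes $A^f_{\xi_k(X)}Y-A^f_{\xi_k(Y)}X\in\mbox{span}\{E\}$ for all such $X,Y$, where $\xi_k(X):=\sum_{j=1}^d\langle D_X\mu_j,\mu_k\rangle\,\zeta_j\in T_f^\perp M$. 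At this point I argue as in Lemmas 2.9 and 2.10: if $r:=\mbox{dim}\,\xi_k(\mbox{Ker}\,A^G_{\mu_{d+1}})\geq 1$ for some $k$, choose $Z_1,\ldots,Z_r$ so that $\xi_k(Z_1),\ldots,\xi_k(Z_r)$ is a basis of $\mathcal{L}_k:=\xi_k(\mbox{Ker}\,A^G_{\mu_{d+1}})$, set $\phi_a(X)=A^f_{\xi_k(X)}Z_a-A^f_{\xi_k(Z_a)}X\in\mbox{span}\{E\}$, and observe that on $\mbox{Ker}\,\xi_k\cap\bigcap_{a=1}^r\mbox{Ker}\,\phi_a$ — a subspace of $\mbox{Ker}\,A^G_{\mu_{d+1}}$ of dimension at least $(n-1)-2r$ — one has $A^f_{\xi_k(Z_a)}X=0$ for every $a$; hence $\nu_r^c\geq\mbox{dim}\,N(\alpha^f_{\mathcal{L}_k})\geq n-2r-1$ with $1\leq r\leq d$, contradicting the hypothesis $\nu_r^c\leq n-2r-2$. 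Therefore $\xi_k\equiv 0$ for every $k$, i.e. $D_X\mu_j=0$ for all $X\in\mbox{Ker}\,A^G_{\mu_{d+1}}$ and all $j$. Finally, putting $X=E$ and $Y,Z\in\mbox{Ker}\,A^G_{\mu_{d+1}}$ in the identity above gives $D_E(\omega(Y,Z))=D_Y(\omega(E,Z))=0$, and since the vectors $\omega(Y,Z)$, $Y,Z\in\mbox{Ker}\,A^G_{\mu_{d+1}}$, span $S(\omega)$ by (\ref{16}), the endomorphism $D_E$ vanishes on $S(\omega)$, hence on $L=\mbox{span}\{G\}\oplus S(\omega)$. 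Combined with the previous step and the splitting $TU=\mbox{Ker}\,A^G_{\mu_{d+1}}\oplus\mbox{span}\{E\}$, this gives $D\equiv 0$.

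The main obstacle is the dimension count in the last step: it is there that the codimension restriction enters, through the hypothesis $\nu_s^c\leq n-2s-2$, and everything else reduces to linear algebra with the two normal connections. Two points need care: first, that each $\xi_k$ genuinely takes values in $T_f^\perp M$ and that $\mathcal{L}_k$ has dimension exactly $r\leq d$, so that the conformal $r$-nullity bound is applicable; second, that the two free directions $G,\mu_o$ of $L$ are controlled for every $X\in TU$ and not only for $X\in\mbox{Ker}\,A^G_{\mu_{d+1}}$, which is why one needs both the parallelism of $F$ and of $\zeta$ in the normal bundle of $F$ and the full statement $\tilde{\nabla}^\perp_X\mu_o\in\mbox{span}\{\mu_{d+1}\}$ reached inside the proof of Lemma 2.9.
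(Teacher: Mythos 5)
Your proof is correct, but its decisive step is genuinely different from the paper's. Both arguments coincide up front: you and the paper introduce the difference tensor ($D_X$, called $K(X)$ in the paper), kill the $G$-direction via ${}^F\nabla^\perp F=\tilde{\nabla}^\perp G=0$, and derive from the two Codazzi equations the symmetry $D_X(\omega(Y,Z))=D_Y(\omega(X,Z))$ for $Z\in\mbox{Ker}\,A^G_{\mu_{d+1}}$. From there the paper finishes by pure algebra: since both connections are metric, $\langle K(X)\omega(Y,Z),\omega(W,V)\rangle$ is skew under exchanging the two $S(\omega)$-slots, and the Codazzi symmetry lets one braid the tangent arguments around six times to show this quantity equals its own negative, hence vanishes; together with a direct computation of $\langle G,K(X)\omega(Y,Z)\rangle$ and the nondegeneracy of $L$ this gives $K\equiv 0$ without ever invoking the conformal nullity hypothesis inside this lemma, and without needing to control $\mu_o$ separately. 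You instead spend the hypothesis $\nu_r^c\leq n-2r-2$ a third time, running the same kernel-dimension count as in Lemmas 2.9 and 2.10 on the maps $X\mapsto\sum_j\langle D_X\mu_j,\mu_k\rangle\zeta_j$; this forces you to first establish $D\mu_o\equiv 0$ (which you do legitimately, from the constancy of $\zeta$ in the normal bundle of $F$ and the intermediate fact $\tilde{\nabla}^\perp_X\mu_o\in\mbox{span}\{\mu_{d+1}\}$ proved inside Lemma 2.9, though note that the \emph{statement} of Lemma 2.9 alone, which only covers $X\in\mbox{Ker}\,A^G_{\mu_{d+1}}$, would not suffice here). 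I checked your dimension count and it closes: $\dim N\left(\alpha^f_{\mathcal{L}_k}\right)\geq n-2r-1$ with $1\leq r\leq d$ does contradict the hypothesis. What the paper's route buys is economy — the lemma becomes a statement of linear algebra plus Codazzi, valid wherever the symmetry relation holds; what your route buys is uniformity with the surrounding lemmas, at the cost of consuming the curvature hypothesis where it is not needed and of leaning on the interior of another proof.
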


\begin{proof} For a fixed $X\in TU,$ we define the linear map $K(X)\;\colon L\to L$ by $K(X)\mu =\nabla^\prime_X\mu-\hat{\nabla}_X\mu,\;\forall \mu \in L.$ Since $L$ is nondegenerate, for to prove that $K(X)\equiv 0$ it suffices to prove, for all $W,\,V,\,Y,\,Z\in \mbox{Ker}\,A^G_{\mu_{d+1}},$ the following relations
$$
\langle K(X)G,G\rangle=\langle K(X)G,\omega(Y,Z)\rangle=\langle G,K(X)\omega(Y,Z)\rangle=\langle K(X)\omega(Y,Z),\omega(W,V)\rangle=0,
$$

\noindent{due to} (\ref{16}) and (\ref{21}). First we note that $K(X)G=0$ because ${}^F\nabla_X^\perp F =\tilde{\nabla}^\perp_X G =0.$ Also, for all $Y,\,Z\in \mbox{Ker}\,A^G_{\mu_{d+1}},$ we have
\begin{eqnarray*}
&\,&\langle G,K(X)\omega(Y,Z)\rangle =\langle G,\nabla^\prime_X\omega(Y,Z)\rangle - \langle G,\hat{\nabla}_X\omega(Y,Z)\rangle\\ 
&=&\left\langle \tau(F),\tau\left({}^F\nabla_X^\perp\alpha^F(Y,Z)\right)\right\rangle 
-\langle G,\tilde{\nabla}^\perp_X\omega(Y,Z)\rangle
=\left\langle F,{}^F\nabla_X^\perp\alpha^F(Y,Z)\right\rangle -\left\langle G,\tilde{\nabla}^\perp_X\alpha^G(Y,Z)\right\rangle\\
&=&X\left\langle F,\alpha^F(Y,Z)\right\rangle -X\left\langle G,\alpha^G(Y,Z)\right\rangle
=-X\langle Y,Z\rangle +X\langle Y,Z\rangle=0.
\end{eqnarray*}

\noindent{Now} we verify that holds the relation 
$$
K(X)\omega(Y,Z)=K(Y)\omega(X,Z),\;\mbox{for all}\; X\in TU\;\mbox{and all}\;Y,\,Z\in \mbox{Ker}\,A^G_{\mu_{d+1}}.
$$
In fact, for all $\mu\in L,$ we have
\begin{eqnarray*}
&\,&\langle K(X)\omega(Y,Z),\mu \rangle=\left\langle\tau\left({}^F\nabla_X^\perp\alpha^F(Y,Z)\right)-\tilde{\nabla}^\perp_X\alpha^G(Y,Z),\mu\right\rangle\\
&=& \left\langle\tau\left(\left({}^F\nabla_X^\perp\alpha^F\right)(Y,Z)\right)+\tau\left(\alpha^F\left(\nabla_X Y,Z\right)\right)+\tau\left(\alpha^F\left( Y,\nabla_X Z\right)\right)\right.\\
&-&\left.\left(\tilde{\nabla}_X^\perp\alpha^G\right)(Y,Z)-\alpha^G\left(\nabla_X Y,Z\right)
-\alpha^G\left( Y,\nabla_X Z\right),\mu\right\rangle\\
&=&\left\langle\tau\left(\left({}^F\nabla_X^\perp\alpha^F\right)(Y,Z)\right)-\left(\tilde{\nabla}_X^\perp\alpha^G\right)(Y,Z),\mu\right\rangle\\ 
&=&\left\langle\tau\left(\left({}^F\nabla_Y^\perp\alpha^F\right)(X,Z)\right)-\left(\tilde{\nabla}_Y^\perp\alpha^G\right)(X,Z),\mu\right\rangle\\
&=&\left\langle\tau\left({}^F\nabla_Y^\perp\alpha^F(X,Z)\right)-\tau\left(\alpha^F\left(\nabla_Y X,Z\right)\right)
-\tau\left(\alpha^F\left( X,\nabla_Y Z\right)\right)-\tilde{\nabla}_Y^\perp\alpha^G(X,Z)\right.\\
&+&\left.\alpha^G\left(\nabla_Y X,Z\right)
+\alpha^G\left( X,\nabla_Y Z\right),\mu\right\rangle=\left\langle\tau\left({}^F\nabla_Y^\perp\alpha^F(X,Z)\right)-\tilde{\nabla}^\perp_Y\alpha^G(X,Z),\mu\right\rangle\\
&=&\langle K(Y)\omega(X,Z),\mu \rangle.
\end{eqnarray*}

\noindent{Above} we have used that $\omega=\tau\left(\alpha^F\right),$ $\alpha^G(X,Z)=\omega(X,Z),$ for any $X\in TM,$ $Z\in \mbox{Ker}\,A^G_{\mu_{d+1}}$ and that $\alpha^F$ and $\alpha^G$ satisfy the Codazzi's equations. It is a straightforward calculation to see that $K(X)$ satisfies
$$
\langle K(X)\omega(Y,Z),\omega(W,V) \rangle =-\langle \omega(Y,Z),K(X)\omega(W,V) \rangle\;\mbox{for all}\;W,\,V,\,X,\,Y,\,Z\in TM.
$$
Now if $W,\,V,\,Y,\,Z\in \mbox{Ker}\,A^G_{\mu_{d+1}}$ and $X$ is an arbitrary tangent vector, then
\begin{eqnarray*}
\langle K(X)\omega(Y,Z),\omega(W,V) \rangle &=&\langle K(Y)\omega(X,Z),\omega(W,V)\\
&=&-\langle \omega(X,Z),K(Y)\omega(W,V)\rangle=-\langle \omega(X,Z),K(W)\omega(Y,V)\rangle\\
&=&\langle K(W)\omega(X,Z),\omega(Y,V)\rangle=\langle K(X)\omega(W,Z),\omega(Y,V)\rangle\\
&=&\langle K(Z)\omega(X,W),\omega(Y,V)\rangle=-\langle \omega(X,W),K(Z)\omega(Y,V)\rangle\\
&=&-\langle \omega(X,W),K(V)\omega(Y,Z)\rangle=\langle K(V)\omega(X,W),\omega(Y,Z)\rangle\\
&=&\langle K(X)\omega(W,V),\omega(Y,Z)\rangle=-\langle \omega(W,V),K(X)\omega(Y,Z)\rangle.
\end{eqnarray*}

\noindent{Therefore}, Lemma 2.11 has been proved.\end{proof}

Now, due to (\ref{23}), the equality (\ref{22}) becomes
\begin{eqnarray*}
\left(\mathcal{G}_{*}\right)_{\theta(0)}\theta^\prime(0)&=& 
\left(G_{*}\right)_{\gamma(0)}\left[\gamma^\prime(0)-A^G_{\tau(\delta)}\gamma^\prime(0)+\nabla_{\gamma^\prime(0)} cE\right]
+\tilde{\nabla}^\perp_{\gamma^\prime(0)}\tau(\delta) +\alpha^G(\gamma^\prime(0),cE)\\
&=&\left(G_{*}\right)_{\gamma(0)}\left[\gamma^\prime(0)-A^G_{\tau(\delta)}\gamma^\prime(0)+\nabla_{\gamma^\prime(0)} cE\right]
+\hat{\nabla}_{\gamma^\prime(0)}\tau(\delta) +\omega(\gamma^\prime(0),cE)\\
&=&\left[\left(G_{*}\right)_{\gamma(0)}\circ \left(\left(F_{*}\right)_{\gamma(0)}\right)^{-1}\right]\circ\left(F_{*}\right)_{\gamma(0)}\left[\gamma^\prime(0)-A^F_\delta\gamma^\prime(0)+\nabla_{\gamma^\prime(0)} cE\right]\\
&+&\tau\left[{}^F\nabla_{\gamma^\prime(0)}^\perp\delta +\alpha^F\left(\gamma^\prime(0),cE\right)\right].
\end{eqnarray*}

\noindent{Observe} that $A^G_{\tau(\delta)}=A^F_\delta .$ Then, $\left|\left(\mathcal{G}_{*}\right)_{\theta(0)}\theta^\prime(0)\right|^2=\left|\left(\mathcal{F}_{*}\right)_{\theta(0)}\theta^\prime(0)\right|^2$ for all differentiable curve $\theta (t)$ in $\Omega.$ This finish the proof of the claim.

Finally, taking $\mathcal{T}=\mathcal{G}\circ \mathcal{F}^{-1},$ we obtain an isometric immersion from a neighborhood of $F(U)$ in $I\!\!L^{n+d+2}$ into a neighborhood of $G(U)$ in $I\!\!L^{n+d+3}$ such that 
$$
\mathcal{T}(F(q))=\mathcal{T}(\mathcal{F}(q,0))=\mathcal{G}(q,0)=G(q),\,\forall q\in U.
$$
According to previous observations $\mathcal{T}$ induces a conformal immersion $\Gamma$ from a neighborhood of $f(U)$ in $\Bbb R^{n+d}$ into a neighborhood of $g(U)$ in $\Bbb R^{n+d+1}$ such that $g=\Gamma\circ f.$ This prove Theorem 1.2 in Case I.

\vspace{0.2cm}

\noindent{\bf Case II.} $\mbox{dim}\,S\left(\alpha^G\right)=d+1$ on $U$. In this case, by Lemma 2.6, $\mbox{rank}\,A^G_{\mu_{d+1}}\equiv 0$ on $U$ and,consequently,
$$S\left(\alpha^G\right) =\mbox{span}\,\{\mu_o,\mu_1,\ldots ,\mu_d\}.$$
by (\ref{3}).
 
The linear isometry $\tau\colon\;T_F^\perp U\to \left(\mbox{span}\,\{\mu_{d+1}\}\right)^\perp$ given by 
$$
\tau(F)=G,\;\tau (\zeta)=\mu_o\;\;\;\mbox{and}\;\;\;\tau(\zeta_i)=\mu_i,\,1\leq i\leq d,
$$
now satisfies $\tau (\alpha^F(X,Y))=\alpha^G(X,Y),\,\forall X,\,Y\in TU.$ Thus, $\tau$ is smooth. Then $\mu_o$ also is smooth. Since $\mbox{span}\,\{\mu_{d+1}\}$ is the orthogonal complement of $\mbox{span}\,\{G\}\oplus S\left(\alpha^G\right)$ in $T_G^\perp U,$ the unitary vector field $\mu_{d+1}$ can be choose smooth. Define 
$$\begin{tabular}{ccc}
$TU$&$\stackrel{\Psi}{\longrightarrow}$&$\mbox{span}\,\left\{\,G,\,\mu_1,\ldots \mu_d\,\right\}$ \\
$                  X      $&$   \longrightarrow                  $&$ \tilde{\nabla}^\perp_X\mu_{d+1}$,
\end{tabular}$$ 
Using the Codazzi's equations for $\alpha^G,$ the compatibility of the normal connection of the immersion $G$ with the metric in $T_G^\perp M$ and that $\left\langle \alpha^G,\mu_{d+1}\right\rangle\equiv 0,$ it is not difficult to see that
\begin{equation}
\left\langle \alpha^G(X,Y),\tilde{\nabla}^\perp_Z\mu_{d+1}\right\rangle=\left\langle \alpha^G(Z,Y),\tilde{\nabla}^\perp_X\mu_{d+1}\right\rangle,
\;\forall q\in U,\;\forall X,Y,Z\in T_qM,\label{24}
\end{equation}
that is,
\begin{equation}
A^G_{\tilde{\nabla}^\perp_Z\mu_{d+1}}X=A^G_{\tilde{\nabla}^\perp_X\mu_{d+1}}Z ,\;\forall q\in U,\;\forall X,Z\in T_qM.\label{25}
\end{equation}
At $q\in U$, let us denote $S$ the subspace of $T_{f(q)}^\perp M$ given by
$$
S=\left\{\,\sum_{i=1}^d\left\langle \tilde{\nabla}^\perp_Z\mu_{d+1},\mu_i\right\rangle \zeta_i\;:\;Z\in T_qM\,\right\}.
$$
Suppose that $\mbox{dim}\,S=r$ and $r\geq 1.$ Take a basis of $S$ given by
$\xi_j=\sum_{i=1}^d\left\langle \tilde{\nabla}^\perp_{Z_j}\mu_{d+1},\mu_i\right\rangle \zeta_i,$ $1\leq j\leq r.$ It holds that $d\geq r\geq \mbox{dim}\,\left(\mbox{Im}\,\Psi\right)-1.$ From (\ref{3}) and (\ref{24}) we deduce that
\begin{equation}
A^f_{\xi_j}X-\left\langle \mu_{o}, \tilde{\nabla}^\perp_{Z_j}\mu_{d+1} \right\rangle X=A^G_{\tilde{\nabla}^\perp_X\mu_{d+1}}Z_j ,\label{26}
\end{equation}
for all $q\in U$, $X\in T_qM$ and $j,\,1\leq j\leq r.$ Define $\gamma\in S$ by $\langle \gamma, \xi_j\rangle=\left\langle \mu_{o}, \tilde{\nabla}^\perp_{Z_j}\mu_{d+1} \right\rangle,$ $1\leq j\leq r.$ If $X\in \left(\mbox{Ker}\,\Psi\right)$ then $X\in N\left(\alpha_{S}-\langle\,,\,\rangle\gamma\right)$ due to (\ref{26}). Thus,
$$
\nu^c_r\geq \mbox{dim}\,N\left(\alpha_{S}-\langle\,,\,\rangle\gamma\right)\geq 
\mbox{dim}\,\left(\mbox{Ker}\,\Psi\right)= n-\mbox{dim}\,\left(\mbox{Im}\,\Psi\right)\geq n-r-1
$$
which is in contradiction with our hypothesis on the $r$-conformal nullity. Then $r=0$ and $\tilde{\nabla}^\perp_Z\mu_{d+1}=\left\langle \tilde{\nabla}^\perp_Z\mu_{d+1},\mu_{o}\right\rangle G$ for all $q\in U$ and $Z\in T_qM$ by (\ref{13}). From (\ref{25}) and $A^G_G=-I,$ we deduce that
$$
\left\langle \tilde{\nabla}^\perp_Z\mu_{d+1},\mu_{o}\right\rangle X=\left\langle \tilde{\nabla}^\perp_X\mu_{d+1},\mu_{o}\right\rangle Z,\;\forall q\in U,\;\forall X,Z\in T_qM.
$$
Since $n\geq 2,$ this implies that  $\left\langle \tilde{\nabla}^\perp_Z\mu_{d+1},\mu_{o}\right\rangle=0$ for all $q\in U$ and $Z\in T_qM$. Hence 
$\tilde{\nabla}^\perp_Z\mu_{d+1}=0$ and the unitary vector field $\mu_{d+1}$ is parallel along $U$. Then $L,$ the orthogonal complement of $\mbox{span}\,\{\mu_{d+1}\}$ in $T_G^\perp U,$ is a $(d+2)$-dimensional nondegenerate parallel subbundle of $T_G^\perp U,$ that is, $\tilde{\nabla}^\perp_X\mu\in L$  for all $X\in TU$ and $\mu\in L.$ We claim that $\mu_{d+1}$ is constant along $U$. In fact, for all $X\in TU$ we have
$$\tilde{\nabla}_X\mu_{d+1}=-A^G_{\mu_{d+1}}X+\tilde{\nabla}_X^\perp\mu_{d+1}=0.$$
We fix $q\in U$ and consider a differentiable curve $\gamma(t)$ in $U$ with $\gamma(0)=q.$ Then, putting $\mu_{d+1}(q)=\mu,$ we have
\begin{eqnarray*}
\frac{d}{dt}\langle G(\gamma(t))-G(q),\mu\rangle&=&\frac{d}{dt}\langle G(\gamma(t))-G(q),\mu_{d+1}(t)\rangle\\ 
&=&\left\langle \left(G_*\right)_{\gamma(t)}\gamma^\prime(t),\mu_{d+1}(t)\right\rangle +\left\langle G(\gamma(t))-G(q),\tilde{\nabla}_{\gamma^\prime(t)}\mu_{d+1}(t)\right\rangle =0.
\end{eqnarray*}
Therefore, $\langle G(\gamma(t))-G(q),\mu\rangle$ is constant and equal to zero. So $G(\gamma(t))$ lies on the $(n+d+2)$-dimensional  affine Lorentz space $G(q)+T_qM\oplus L(q).$ Since $\gamma (t)$ is arbitrary, it follows that $G(U)$ lies on $G(q)+T_qM\oplus L(q).$ Notice that $G(q)$ belongs to the vector space $L(q)$ and, consequently, $-G(q)$ also belongs to $L(q).$ Thus, $G(q)+T_qM\oplus L(q)$ pass through the origin. Hence, $G(U)\subset \left(G(q)+T_qM\oplus L(q)\right)\cap \mathcal{V}^{n+d+2}=\mathcal{V}^{n+d+1}.$ By (\ref{2}), $J_{\overline{\zeta}} (g(U))\subset H_{\overline{\zeta}}\cap \mathcal{V}^{n+d+1}=\Bbb R^{n+d}.$ This implies that $g$ restrict to $U$ reduces codimension to $n+d.$ So we can apply Theorem 1.2 in \cite{C-D} and Corollary 1.1 in \cite{Si} for conclude that there is a conformal diffeomorphism $\Gamma$ from an open subset of $\Bbb R^{n+d}$ containing $f(U)$ to an open subset of $\Bbb R^{n+d}$ containing $g(U)$ such that $g=\Gamma\circ f.$ This finish proof of Case II and of Theorem 1.2.\end{proof}

\end{document}